\documentclass[a4paper,12pt]{article}
\usepackage{amsmath,amsthm,amssymb,amsfonts}
\usepackage[latin1]{inputenc}
\usepackage{enumerate,enumitem}
\usepackage{graphics,graphicx,epsfig,psfrag}
\topmargin = 0pt
\oddsidemargin = 0pt
\evensidemargin = \oddsidemargin
\headheight = 0pt
\headsep = 0pt
\textheight = 48\baselineskip
\textwidth = 21cm
\advance\textwidth by -2in
\binoppenalty = 10000
\relpenalty = 10000
\theoremstyle{plain}
\newtheorem{theorem}{Theorem}
\newtheorem{proposition}{Proposition}
\newtheorem{lemma}[proposition]{Lemma}
\theoremstyle{remark}

\def\N{{\mathbb N}}
\def\Q{{\mathbb Q}}
\def\R{{\mathbb R}}
\def\Z{{\mathbb Z}}
\def\CC{{\mathcal C}}
\def\DD{{\mathcal D}}
\def\SS{{\mathcal S}}
\def\ZZ{{\mathcal Z}}
\def\o{{\mathrm o}}
\def\id{{\mathrm{id}}}
\def\ly{\fontencoding{U}\fontfamily{lasy}\fontseries{m}\fontshape{n}\selectfont}
\def\guil#1{\leavevmode\hbox{{\ly(\kern-0.20em(\kern+0.20em}}\nobreak{}\,#1\,%
  \nobreak\hbox{{\ly\kern+0.20em)\kern-0.20em)}}}
\def\up{\textup}
\def\from{\colon} 
\def\bars#1{\lvert #1 \rvert} 
\def\bigbars#1{\bigl\lvert #1 \bigr\rvert} 
\def\Bigbars#1{\Bigl\lvert #1 \Bigr\rvert} 
\def\lrbars#1{\left\lvert #1 \right\rvert} 
\def\Bars#1{\lVert #1 \rVert} 
\def\bigBars#1{\bigl\lVert #1 \bigr\rVert} 
\def\BigBars#1{\Bigl\lVert #1 \Bigr\rVert} 
\def\lrBars#1{\left\lVert #1 \right\rVert} 
 
\def\th{^{\text{th}}}
\def\o{{\mathrm o}}
\def\O{{\mathrm O}}

\begin{document}
\title{On the centralizer of diffeomorphisms of the half-line}
\author{Hélène \textsc{Eynard}\thanks{UMPA (ENS Lyon),
heynardb@umpa.ens-lyon.fr}}
\maketitle

Let $f$ be a smooth diffeomorphism of the closed half-line $\R_+$ with a single 
fixed point at the origin. In this article, we study the centralizer of $f$ in 
the group $\DD^r$ of $\CC^r$ diffeomorphisms of $\R_+$, $1 \le r \le \infty$, 
that is, the (closed) subgroup $\ZZ^r_f$ of $\DD^r$ made up of all diffeomorphisms
commuting with $f$. The first things to observe are that $\ZZ^r$ decreases with
$r$,  contains the infinite cyclic subgroup generated by $f$ and is quite small.
Indeed, for $r=1$, well-known theorems by G.~Szekeres and N.~Kopell \cite{Sz,Ko}
show that $\ZZ^1_f$ is always a one-parameter subgroup of $\DD^1$. For $r\ge2$, 
the situation is subtler, and for instance both of the limit cases permitted by the
inclusions
\begin{equation*}
   \Z \cong \{ f^n,\ n \in \Z \} \quad\subset\quad \ZZ^r_f \quad\subset\quad \ZZ^1_f \cong \R 
\end{equation*}
can occur. According to F. Takens' work \cite{Ta}, if $f$ is not infinitely
tangent to the identity at $0$ then $\ZZ^1_f$ consists of smooth diffeomorphisms
and therefore coincides with $\ZZ^\infty_f$. On the other hand, in \cite{Se}, 
F.~Sergeraert builds a diffeomorphism~$f$ whose centralizer $\ZZ^2_f$ is 
strictly contained in $\ZZ^1_f$, and one can actually check~\cite{Ey} that, in
this example, $\ZZ^2_f$ reduces to the group spanned by~$f$ ---~and is hence as 
small as possible. The following result says that there exist intermediate 
situations:

\begin{theorem} \label{t:centralizer}
There exists a smooth diffeomorphism $f$ of $\R_+$ with a single fixed point at
the origin, whose centralizer $\ZZ^r_f$, for $2 \le r \le \infty$, is a
proper, dense and uncountable subgroup of the one-parameter group $\ZZ^1_f$.
\end{theorem}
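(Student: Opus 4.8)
\emph{Step 1 (reduction via Szekeres--Kopell).} By Takens' theorem recalled above, $f$ has to be taken infinitely tangent to the identity at $0$, so I place myself in the Szekeres picture: $\ZZ^1_f=\{\phi^t\}_{t\in\R}\cong\R$, where $(\phi^t)_{t\in\R}$ is the one-parameter flow of $\CC^1$ diffeomorphisms with $\phi^1=f$ and $0$ as its only fixed point, generated by the Szekeres vector field $X$; equivalently, on some half-neighbourhood $(0,a_0]$ of the origin there is a smooth \emph{Fatou coordinate} $\Phi$, unique up to an additive constant, with $\Phi\circ f=\Phi+1$ and $\phi^t=\Phi^{-1}\circ\tau_t\circ\Phi$, $\tau_t$ being the translation by $t$. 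For $1\le r\le\infty$ put $G_r:=\{t\in\R:\phi^t\in\DD^r\}$; each $G_r$ is a subgroup of $\R$ containing $\Z$ (since $\phi^1=f\in\DD^\infty$), and $G_\infty\subseteq\cdots\subseteq G_2\subseteq G_1=\R$. Thus the theorem reduces to producing a smooth $f$ for which $G_2=G_\infty$ is a proper, dense and uncountable subgroup of $\R$.

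\emph{Step 2 (a Sergeraert-type construction with a Diophantine parameter).} Start from a flat model $f_0$ near $0$, say the one with Fatou coordinate $\Phi_0(x)=e^{1/x}$ (so $f_0$ is smooth, infinitely tangent to $\id$, with smooth Szekeres field $-x^2e^{-1/x}\,\partial_x$ and hence $\phi^t_0$ smooth on $\R_+$ for every $t$); write $a_n:=\Phi_0^{-1}(n)\downarrow0$ for its fundamental sequence. The diffeomorphism $f$ is obtained by modifying $f_0$ only on a very lacunary sequence of ``bands'' --- each band $B_k$ a block of consecutive fundamental domains around an index $N_k$, with $N_k\uparrow\infty$ extremely fast --- the modification on $B_k$ carrying a prescribed ``frequency'' $q_k\in\N$, the sequence $(q_k)$ increasing very fast and chosen divisible by $k!$. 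The amplitudes, band widths and the growth of the sequences are to be adjusted --- using Sergeraert/Mather-type estimates on flat functions and their derivatives --- so that: (a) the ``time-one defect'' of the modification is flat at $0$, so $f:=\phi^1$ is a genuine smooth diffeomorphism of $\R_+$ with $0$ as its only fixed point; (b) the modification is nonetheless large enough that the Szekeres field of $f$ is only $\CC^1$ at $0$; (c) the regularity of $\phi^t$ at $0$ is controlled band by band by the size of $\mathrm{dist}(q_kt,\Z)$, and in an ``all or nothing'' fashion: $\phi^t\in\DD^2$ already forces $\phi^t\in\DD^\infty$.

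\emph{Step 3 (the centralizer and its properties).} Pushing $\phi^t=\Phi^{-1}\circ\tau_t\circ\Phi$ through the model, one finds that $\phi^t$ differs from the smooth model flow map $\Phi_0^{-1}\circ\tau_t\circ\Phi_0$ only on the bands $B_k$, where the difference and its derivatives are bounded by $\mathrm{dist}(q_kt,\Z)$ times explicit powers of $q_k$ and of the contraction rate of $\Phi_0^{-1}$. Re-running the flat-function estimates then shows that $\phi^t$ extends to a $\CC^2$ --- equivalently, by (c), $\CC^\infty$ --- diffeomorphism of $\R_+$ \emph{if and only if} $t$ satisfies an explicit Diophantine condition $(\star)$ stating that $\mathrm{dist}(q_kt,\Z)\to0$ at a rate fixed by the construction. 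Hence $\ZZ^r_f=\{\phi^t:(\star)\text{ holds for }t\}$ for all $2\le r\le\infty$, and $G:=\{t\in\R:(\star)\}$ has the three required properties: it is a \emph{subgroup} because $\mathrm{dist}(q_k(s+t),\Z)\le\mathrm{dist}(q_ks,\Z)+\mathrm{dist}(q_kt,\Z)$ and the rate clause --- stated as a $\limsup$ bound --- survives this inequality, while $1\in G$ as $q_k\in\N$; it is \emph{dense} because every rational $p/q$ lies in it, since $q\mid q_k$ for all large $k$ (as $k!\mid q_k$) forces $q_kp/q\in\Z$; it is \emph{proper} because for Lebesgue-a.e.\ $t$ the sequence $(q_kt)$ equidistributes modulo $1$, so $\mathrm{dist}(q_kt,\Z)\not\to0$; and it is \emph{uncountable} because, $q_{k+1}$ being enormous compared with $q_k$, the set of $t$ fulfilling the constraints of $(\star)$ is cut out by a nested Cantor scheme --- each surviving interval at stage $k$ containing at least two surviving intervals at stage $k+1$ --- and so has the cardinality of the continuum.

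\emph{The main obstacle} is the quantitative heart of Steps 2 and 3: one must choose the frequencies $(q_k)$, the locations $(N_k)$, the band widths and the amplitudes so as to satisfy, simultaneously, four competing demands --- that $f=\phi^1$ be genuinely smooth at $0$ (which pushes towards spread-out, mildly oscillatory modifications), that the Szekeres field of $f$ fail to be $\CC^2$ and that $\mathrm{dist}(q_kt,\Z)\not\to0$ for a.e.\ $t$ (properness, which pushes towards modifications that are not too small), that $(\star)$ be robust enough to define a \emph{dense and uncountable} group, and --- the delicate point --- that the collapse $\DD^2\cap\ZZ^1_f=\DD^\infty\cap\ZZ^1_f$ hold, i.e.\ that no $\phi^t$ be $\CC^2$ without being $\CC^\infty$. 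Each of these is a statement about how $\CC^r$-norms of a perturbation written in Fatou coordinates propagate, through the strongly contracting charts $\Phi_0^{\pm1}$, to $\CC^r$-norms of $\phi^t$ near $0$; reconciling them --- in particular making a single scalar defect per band govern \emph{all} orders of smoothness of $\phi^t$ --- is the real work, a sharpening of the classical Sergeraert--Mather estimates. With the sequences in place, the verifications summarised in Step 3 are then routine.
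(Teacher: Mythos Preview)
Your proposal is a strategy outline rather than a proof: the ``main obstacle'' paragraph explicitly defers the quantitative heart of the matter, and the central claims --- the exact Diophantine characterisation $G=\{t:(\star)\}$ and the collapse $\ZZ^2_f=\ZZ^\infty_f$ --- are stated as desiderata, not established.

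By way of comparison, the paper's route is both different and more modest. It never characterises $\ZZ^r_f$ and never proves $\ZZ^2_f=\ZZ^\infty_f$; it only shows that $\ZZ^\infty_f$ contains $\Z\oplus\sum_{\tau\in K}\tau\Z$ for a Cantor set $K$ of \emph{irrational} times (hence this group is dense and uncountable) and that the single time $t=\tfrac12$ lies outside $\ZZ^2_f$ (hence every $\ZZ^r_f$, $r\ge2$, is proper). The construction is an Anosov--Katok approximation by conjugation: $\xi$ is the $\CC^1$ limit of conjugates $h_k^*\xi_0$ of Sergeraert's oscillating field (plateau depths $u_n=2^{-n^4}$ and $v_n=2^{-n^2}$), the conjugations being, in the time coordinate $\psi$, products of near-identity maps $\varphi_k$ commuting with translation by $1/q_k$ off a short interval. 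The $q_k$ are chosen \emph{odd} and the nested compacta $I_k$ avoid the $k$-th rational, so $K=\bigcap I_k$ misses $\Q$; the two-scale structure $u_n\ll w_n\ll v_n^m$ (with $w_n=2^{-n^3}$) then yields $Lf^{1/2}(a_{i(n_l)})\le -w_{n_l}/u_{n_l}\to-\infty$. Your scheme aims instead for $\Q\subset G$ (via $k!\mid q_k$) with properness from Lebesgue-typical $t$; that is a legitimate alternative target, but the ``iff'' and the $\CC^2\Rightarrow\CC^\infty$ collapse are not needed for the theorem and only make the parameter-balancing harder.

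The substantive gap concerns properness. Your base model $\xi_0=-x^2e^{-1/x}\,\partial_x$ has no oscillation: $x-f_0(x)$ is monotone near $0$, so $f_0$ satisfies Sergeraert's non-oscillation hypothesis with $\lambda=1$ and its Szekeres flow is smooth for every time. In the paper, the $\CC^2$-obstruction at $0$ comes precisely from the two well-separated scales in $\xi_0$: one perturbs where $|\xi_0|=v_n$, so that the $\CC^k$ cost on the flow at good times is of order $w_n v_n^{1-k}$ (small), and one reads off $L\phi^{1/2}$ where $|\xi_0|=u_n$, so that the effect is of order $w_n/u_n$ (large). With a monotone base field there is only one scale at each height, and a $\CC^1$-small band-localised modification of $f_0$ will, absent further ideas, leave $x-f(x)$ uniformly comparable to $x-f_0(x)$ and hence keep $f$ within Sergeraert's regularity regime --- so that \emph{every} $\phi^t$ remains smooth and $G=\R$. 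Your sketch provides no replacement amplification mechanism; until one is specified and the competing estimates are actually carried out, the proposal remains a programme rather than a proof.
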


This theorem follows from the proposition below, where $f$ is the flow at time 
one of the vector field~$\xi$ coming out:

\begin{proposition} \label{p:field} 
There exists a complete $\CC^1$ vector field $\xi$ on $\R_+$, vanishing only at
$0$, whose flow $f^t$ at time $t$ is not $\CC^2$ at $0$ for $t =  1 / 2$ but
is smooth on $\R_+$ for all $t \in \Z \oplus \sum_{\tau \in K} \tau\Z$, where 
$K \subset \R \setminus \Q$ is a Cantor set.
\end{proposition}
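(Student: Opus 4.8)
\medskip

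The plan is to build $\xi$ by perturbing, near the fixed point, the model contraction $x\mapsto x/(1+x)$. In the coordinate $u=1/x$, which sends a neighbourhood of $0$ in $\R_+$ onto a neighbourhood of $+\infty$, the field $-x^2\partial_x$ (time-$t$ flow $x\mapsto x/(1+tx)$) becomes $\partial_u$, with flow the translation $T_t\colon u\mapsto u+t$. I would prescribe the flow near $0$ to be $f^t=\rho\circ T_t\circ\rho^{-1}$, where $\rho=\mathrm{id}+R$ is a $C^\infty$ diffeomorphism of $\R$, close to the identity, whose nontrivial part $R$ is supported far out towards $+\infty$; away from $0$ the field $\xi$ is glued to a standard complete field (say $-x^2\partial_x$, then $-x\partial_x$ near infinity), which settles completeness and the fact that $\xi$ is smooth and nonvanishing off $0$. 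Everything then rests on the choice of $R$, via the exact identity
\[
   f^t(u)-u-t\;=\;R\bigl(\rho^{-1}(u)+t\bigr)-R\bigl(\rho^{-1}(u)\bigr),
\]
and the remark that, $\rho^{-1}=\mathrm{id}+\bar R$ being a bounded perturbation of $\mathrm{id}$ with suitably controlled derivatives, $f^t$ is $C^\infty$ at $0$ as soon as $v\mapsto R(v+t)-R(v)$ is flat at $+\infty$ (all derivatives $\mathrm{O}(v^{-N})$ for every $N$), while $\xi$ is $C^1$, resp.\ not $C^2$, at $0$ according to the decay, resp.\ non-decay at $+\infty$, of suitable combinations of $R',R'',R'''$.

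Concretely I would take $R(u)=\sum_{j\ge1}\eta_j\cos(2\pi m_j u)\,\chi_j(u)$, where each $m_j$ is an \emph{odd} integer, $\chi_j(u)=\chi\bigl((u-s_j)/L_j\bigr)$ for a fixed bump $\chi$ of height one supported in $(0,1)$ (so $\chi_j$ is a smooth plateau on $[s_j,s_j+L_j]$ with $\chi_j^{(\ell)}=\mathrm{O}(L_j^{-\ell})$), and the $s_j\to+\infty$ grow so fast that the intervals $[s_j,s_j+L_j]$ are disjoint and very sparse. On the support of the $j$-th bump (and for $|t|<1$) only that bump contributes, and, writing $\theta_j^t$ for $m_jt$ reduced modulo $\Z$ into $(-\tfrac12,\tfrac12]$,
\[
   R(v+t)-R(v)=\eta_j\cos(2\pi m_j v)\bigl[\cos(2\pi\theta_j^t)\chi_j(v+t)-\chi_j(v)\bigr]-\eta_j\sin(2\pi m_j v)\sin(2\pi\theta_j^t)\chi_j(v+t),
\]
whence the $\ell$-th derivative of $R(\cdot+t)-R(\cdot)$ there has size $\mathrm{O}\bigl(\eta_j m_j^\ell(L_j^{-1}+\Bars{m_j t})\bigr)$. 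Taking $L_j$ huge — say $L_j=(m_j s_j)^j$ — neutralizes the $L_j^{-1}$ term, so $f^t$ is $C^\infty$ at $0$ whenever $\Bars{m_j t}$ is super-polynomially small as $j\to\infty$; this is trivial for $t\in\Z$. To secure it for a whole Cantor set of times I would let each $m_{j+1}$ be a huge odd multiple of $m_j$ (chosen after $s_j$ and $L_j$) and let $K$ be a Cantor set of irrationals inside $\bigcap_j\{t:\Bars{m_j t}\le\delta_j\}$ for a fast enough $\delta_j\to0$ — a routine Liouville-type construction, which incidentally forces $1/2\notin\Z\oplus\sum_{\tau\in K}\tau\Z$. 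Then $f^t$ is $C^\infty$ at $0$ — hence, being also smooth off $0$, on all of $\R_+$ — for $t=1$, for each $\tau\in K$, and therefore for every $t\in\Z\oplus\sum_{\tau\in K}\tau\Z$, such an $f^t$ being a finite composition of those maps.

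The failure at $t=1/2$ is caused by the oddness of the $m_j$: then $\theta_j^{1/2}=\tfrac12$, so $\cos(2\pi\theta_j^{1/2})=-1$ and $\sin(2\pi\theta_j^{1/2})=0$, and on the $j$-th bump $R(v+\tfrac12)-R(v)=-\eta_j\cos(2\pi m_j v)\,[\chi_j(v)+\chi_j(v+\tfrac12)]$ is a full-amplitude oscillation of frequency $m_j$, whose second derivative has size $\sim\eta_j m_j^2$; carried back to the $x$-variable near $x=1/s_j$, the second derivative of $f^{1/2}$ then has size $\sim s_j^2\eta_j m_j^2$. I would therefore set $\eta_j=(m_j^2\log s_j)^{-1}$: then $\eta_j m_j\to0$ and $\eta_j m_j^2=1/\log s_j\to0$, which make $\xi$ of class $C^1$ at $0$, whereas $s_j^2\eta_j m_j^2=s_j^2/\log s_j\to+\infty$ (a fortiori $s_j^2\eta_j m_j^3\to+\infty$), which makes both $f^{1/2}$ and $\xi$ fail to be $C^2$ at $0$.

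The heart of the matter is this three-way balance: the $\eta_j$ have to be large enough relative to $m_j$ and $s_j$ for $\xi$ and $f^{1/2}$ to be non-$C^2$, yet small enough — and the widths $L_j$ and the arithmetic of the $m_j$ good enough — for $\xi$ to stay $C^1$ and for $f^t$ to be smooth for all $t$ in the group generated by $1$ and $K$; and as $K$ is uncountable one cannot enumerate its elements, so one must rely on its totally disconnected, Liouville-type structure and fix all the parameters $(m_j,s_j,L_j,\eta_j,\delta_j)$ by a single recursion. The remaining work is routine but not empty: controlling the terms neglected near the shoulders of the bumps and the effect of post-composing $R(\cdot+t)-R(\cdot)$ with the (itself non-smooth-at-infinity) diffeomorphism $\rho^{-1}$; checking that $\xi$ — which in the $u$-coordinate is the generator $\rho'(\rho^{-1}(u))\,\partial_u$ — does extend $C^1$, but not $C^2$, across $0$; and verifying completeness together with the $C^1$ matching to the standard field away from $0$.
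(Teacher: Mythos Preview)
Your approach is correct in outline and genuinely different from the paper's, though the two share a common skeleton. Both pass to a ``time coordinate'' in which the unperturbed flow is the translation group, conjugate by a perturbation of the identity, use \emph{odd} integers so that $t=\tfrac12$ falls exactly out of phase, and build the Cantor set $K$ by a Liouville-type nesting. The essential difference is \emph{where the wildness lives}. The paper starts from Sergeraert's vector field $\xi_0$, whose undersea landscape of plateaux ($|\xi_0|$ oscillating between $u_n=2^{-n^4}$ and $v_n=2^{-n^2}$) provides the amplification; the conjugation $\Phi=\lim\Phi_k$ is then built inductively as a composition of localized, $\CC^\infty$-converging perturbations (\`a la Anosov--Katok), and the non-$\CC^2$ blow-up of $Lf^{1/2}$ comes from the factor $1/|\xi_0|\sim 1/u_{n_l}$ in the change of variable $\psi$. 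You instead take the tame base field $-x^2\partial_x$, whose time coordinate $u=1/x$ gives only \emph{polynomial} amplification $1/x^2$, and compensate by putting the oscillation into the conjugacy $\rho=\id+R$ itself: your $\rho$ is $\CC^\infty$ but its higher derivatives are unbounded at $+\infty$, and the product $s_j^2\,\eta_j m_j^2$ replaces the paper's $w_{n_l}/u_{n_l}$. What the paper's route buys is flexibility --- it extends verbatim to any initial flow satisfying an abstract oscillation condition (this is the content of the approximation theorem at the end of the paper); your route is more direct and bypasses Sergeraert's specific field, but is tied to the model $-x^2\partial_x$.

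One point deserves emphasis, since you flag it yourself only at the end: the implication ``$G_t:=R(\cdot+t)-R(\cdot)$ flat at $+\infty$ $\Rightarrow$ $f^t$ smooth at $0$'' passes through $H_t=G_t\circ\rho^{-1}$, and the Fa\`a di Bruno expansion of $D^k H_t$ contains products $\prod_B D^{|B|}\rho^{-1}$ that grow like powers of $m_j$ on the $j$-th bump. This is harmless provided $\delta_j$ is chosen \emph{after} $m_j,s_j,L_j$ and small enough to absorb any fixed power of $m_j$ and of $s_j+L_j$ (not just of $s_j$: since $L_j=(m_j s_j)^j$, the bump extends far beyond $s_j$, so ``$\mathrm O(v^{-N})$'' must be checked at $v\sim L_j$ as well). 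With the recursion order $m_j\to s_j\to L_j\to\delta_j\to m_{j+1}$ and, say, $\delta_j\le (m_j L_j)^{-j}$, all Fa\`a di Bruno terms are $\mathrm O(u^{-N})$ for every $N$, and the Cantor condition $m_{j+1}\ge Cm_j/\delta_j$ is met by choosing $m_{j+1}$ last. This is indeed routine once spelled out, but it is the one place where a reader might worry your sketch hides a genuine obstruction.
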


\medskip

A natural question to ask now is whether diffeomorphisms $f$ whose centralizer 
$\ZZ^r_f$, $r \ge 2$, is neither the one-parameter group generated by $f$ 
(namely, $\ZZ^1_f \cong \R$) nor the discrete group spanned by $f$ (that is, 
$\{ f^n, n \in \Z \} \cong \Z$), are very peculiar or not. At the end of the
paper, Theorem \ref{t:general} gives a partial answer to this question: every 
diffeomorphism of $\R_+$ which satisfies a certain oscillation condition and
belongs to a smooth flow (with the usual hypotheses on the unique fixed point) can 
be approximated in a suitable sense by diffeomorphisms $f$ whose centralizer 
$\ZZ^r_f$ is as in Theorem \ref{t:centralizer}. The proof of this second theorem
is very similar to that of the first but involves more technicalities. For this 
reason, we discuss the weaker statement in priority. 

It would also be interesting to know if the centralizer $\ZZ^r_f$, when it is a
proper subgroup of $\R \cong \ZZ^1_f$, can have positive Lebesgue measure, or
even contain any Diophantine number. It turns out \cite{Ey} that the Cantor set 
we construct in our proof of Proposition \ref{p:field} consists of Liouville
numbers, and hence has measure zero.\medskip

\textbf{Acknowledgements} I am extremely grateful to Sylvain Crovisier for explaining the method of
approximation by conjugation to me and suggesting that it could be used in this
work to preserve and control the desired smoothness of the limit flow. More
generally I am deeply thankful for his continued interest in my progress and
his useful comments on this article. I would 
also like to thank Jean-Christophe Yoccoz for sharing his insight on the subject
with me and encouraging me to work on this 
particular question. These two interactions were possible thanks to the
financial support of the Agence Nationale de la Recherche (through the ``Symplexe'' project). Last but not least, this work would not have been possible
without the considerable help of Emmanuel Giroux, who dedicated
much of his time and energy to me through countless discussions, reflexions and rewritings,
always leading to a better understanding, and I warmly thank him for his
uncommon involvement and patience.

\section*{Proof of the proposition}

\subsection{Overview} \label{ss:overview}

The following proof combines the strategy used by F. Sergeraert in \cite[Section 4]
{Se} with the method of approximation by conjugation introduced by D.~Anosov and
A.~Katok in \cite{AK} and later developped by many authors (see \cite{FK} and 
references therein). We start with a particular smooth vector field $\xi_0$ (the
same as in~\cite{Se}) and build $\xi$ as the limit of a sequence of deformations
$\xi_k$ where each $\xi_k$ is the pullback $h_k^* \xi_0$ of $\xi_0$ by a smooth 
diffeomorphism $h_k$. Thus, the flow $f_k^t$ of $\xi_k$ is related to the flow 
$f_0^t$ of $\xi_0$ by $f_k^t = h_k^{-1} \circ f_0^t \circ h_k$. The point is to 
cook up the conjugations $h_k$ so that the diffeomorphisms $f_k^t$, $k \ge 1$, 
converge in the $\CC^\infty$ topology for a dense set of times~$t$ but converge only
in the $\CC^1$ topology for some other time. In particular, the diffeomorphisms
$h_k$ diverge in the $\CC^2$ topology. Here, the behaviour of the initial vector 
field plays a crucial role: we take a vector field $\xi_0$ presenting plateaux 
which accumulate at the origin and whose heights tend to zero but with wild
oscillations. According to a theorem of F. Sergeraert \cite[Section 3]{Se}, these 
oscillations are necessary if we want to create a non-smooth flow with small 
perturbations $h_k$ of the identity. Furthermore, Theorem \ref{t:general} at 
the end of this paper states an oscillation condition which is sufficient for 
our construction to work. 

Let us indicate now how these oscillations come into play. First of all, we pick 
an initial vector field $\xi_0$ vanishing only at the origin, and contracting:
every point is attracted by $0$ in the future. Or, in other words, the function 
$\xi_0 / \partial_x$ is negative away from $0$. The graph of this function can
then be depicted as an undersea landscape consisting of a succession of alternating
lowlands $L_k$ and highlands $H_k$ whose respective altitudes $-v_k$ and $-u_k$ 
(measured from the water surface, 
so that $0 < u_k < v_k$) go to zero when $k$ grows, but ``oscillate wildly'' in 
the sense that the ratios $ v_k / u_k $ tend to infinity. 

\begin{figure}[htbp]
\psfrag{e}{$\scriptstyle H_k$}
\psfrag{a}{$\scriptstyle L_k$}
\psfrag{b}{$\scriptstyle H_{k+1}$}
\psfrag{c}{$\scriptstyle L_{k+1}$}
\psfrag{d}{$\scriptstyle H_{k+2}$}
\psfrag{f}{$\scriptstyle \xi_0$}
\centering
\includegraphics[height=1.5cm,width=13cm]{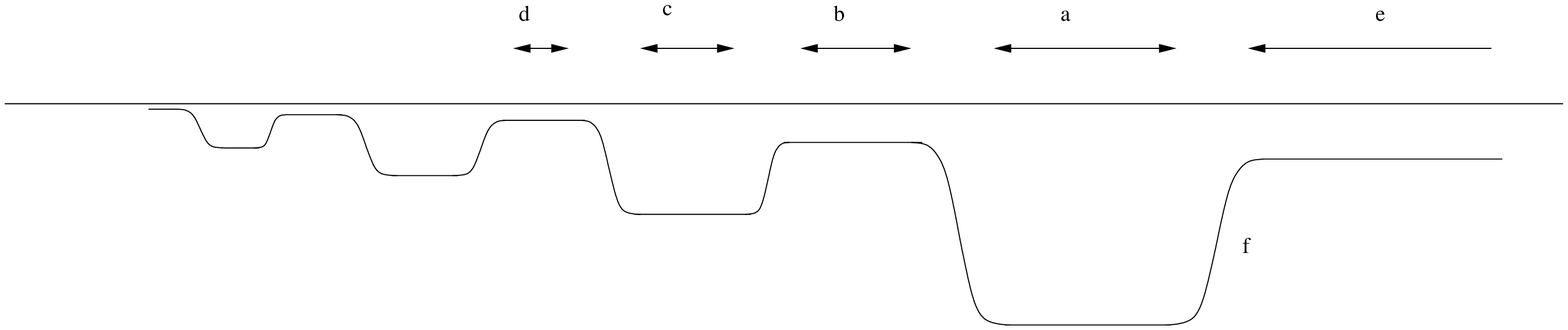}
\end{figure}

A consequence 
of this behaviour is that, if an element $f_0^t$ of the flow takes a segment $S 
\subset H_k$ into $L_k$ for some large $k$, then its restriction to $S$ is an 
affine map with big dilation factor $ v_k / u_k $.

In our deformation process, the diffeomorphisms $h_k$ are defined inductively 
and all coincide with the identity near $0$. Each new perturbation is described 
by the diffeomorphism $g_k = h_k \circ h_{k-1}^{-1}$ and its role is to modify 
the flow of $\xi_0$ locally at a specific time $1/{q_k}$, in a fundamental 
segment $S_k$ of $f_0^{1/{q_k}}$ lying in the lowland $L_k$. In other words,
$g_k^{-1} \circ f_0^{1/{q_k}} \circ g_k$ agrees with $f_0^{1/{q_k}}$ 
outside~$S_k$. Furthermore, we take $g_k$ close enough to the identity so that
the $\CC^k$ norms of the maps 
$$ g_k^{-1} \circ f_0^t \circ g_k - f_0^t, \qquad
   t \in \frac 1 {q_k} \Z \cap [0,1], $$
$$(\text{ and also } \quad h_k^{-1} \circ f_0^t \circ h_k - h_{k-1}^{-1} \circ f_0^t
\circ h_{k-1}\;,\;)$$
are all strictly bounded by $2^{-k}$, and we denote by $I_k$ a compact
neighbourhood of $\frac1{q_k} \Z
\cap (0,1)$ such that the non-strict bounds still hold for all $t \in I_k$. With 
a suitable choice of the sequence~$q_k$, we can arrange that the intersection of
the compact sets $I_k$ is a Cantor set $K$ consisting of irrational times~$t$ 
for which the diffeomorphisms $h_k^{-1} \circ f_0^t \circ h_k$, $k  \ge 1$, 
converge in the $\CC^\infty$ topology. Indeed, it suffices to pick $q_k$ at each
step in such a way that $\frac1{q_k} \Z$ meets any component of $I_{k-1}$ in at
least two points, and also avoids  the $k\th$ rational number (for an arbitrary
enumeration of $\Q$) so that $K = \cap I_k$ is totally irrational.

Although the action of the perturbation diffeomorphism $g_k$ on the map $f_0^{
1/{q_k} }$ is local, its action on the vector field $\xi_0$ and on general 
elements of its flow is not at all. To see this, let us consider the difference 
$\nu_k = g_k^* \xi_0 - \xi_0$. Since $g_k$ commutes with $f_0^{1/{q_k}}$ out
of~$S_k$ and coincides with the identity near $0$, it is actually the identity on 
the whole interval $[0, \min S_k]$. In particular, $\nu_k$ vanishes identically 
there. Inside $S_k$, our choice of $g_k$ gives $\nu_k$ the following shape of
a $\CC^k$-small wave:
 
\begin{figure}[htbp]
\centering
\includegraphics[height=1.5cm,width=12cm]{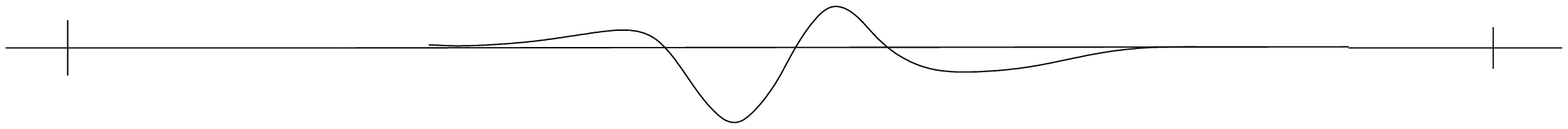}
\end{figure}

On the other hand, the half-line $[\max S_k, +\infty)$ is tiled by the segments 
$S_k^p = f_0^{- p/{q_k}} (S_k)$, $p \ge 1$. The commutation property noted
above now implies that, for every $p \ge 1$, 
\begin{equation} \label{e:propagation}
   \nu_{k\;|S_k^p } 
 = \left( f_0^{ p/{q_k}} \right)^* \left(\nu_{k \;| S_k }\right) . 
\end{equation}
In other words, the wave $\nu_{k \;| S_k }$ is propagated to the right of $S_k$
by the iterates of $f_0^{1/{q_k}}$. Let us look at the wave $\nu_{k \;| S_k^p }$ 
when $S_k^p$ sits on the highland $H_k$. As explained before, the
restriction of $f_0^{ p/{q_k}}$ to $S_k^p$ for such a $p$ is an affine map
of the form
$$ x \in S_k^p \mapsto \frac{ v_k }{ u_k }\; x + c_k \quad
   \text{for some} \quad c_k \in \R . $$   
Then, according to \eqref{e:propagation},
\begin{equation*}
   \left(\nu_{k \;\;| S_k^p }\right) (x)
 = \frac{ \left(\nu_{k \;| S_k }\right) \left( f_0^{ p/{q_k}} (x) \right) }
   { Df_0^{p/{q_k}} (x) } 
 = \left( \frac{ v_k }{ u_k } \right)^{-1} 
   \left(\nu_{k \;| S_k }\right) \left( \frac{ v_k }{ u_k }\; x + c_k \right),    
\end{equation*}
and so, for any integer $m \ge 1$,
$$ D^m \left(\nu_{k \;\;| S_k^p }\right) (x)
 = \left( \frac{ v_k }{ u_k } \right)^{m-1} 
   D^m (\nu_{k \;\;| S_k }) \left( \frac{ v_k }{ u_k }\; x + c_k \right) . $$   
Thus, in the course of the propagation, the wave remains $\CC^1$ small but its
higher order derivatives are amplified and can become big. As we already said, 
the difficulty is then to adjust the perturbation diffeomorphisms $g_k$ so that 
the differences $h_k^*\xi_0 - \xi_0$ (which are essentially the superpositions 
of the propagated waves $\nu_l$, $l \le k$) diverge in the $\CC^2$ topology while 
the conjugates $h_k^{-1} \circ f_0^t \circ h_k$, for $t$ in the Cantor set~$K$, 
still converge in the $\CC^\infty$ topology. Following Sergeraert, a solution is 
roughly to take $u_k$ and $v_k$ respectively equal to $2^{-k^4}$ and $2^{-k^2}$,
while the size of the wave $\nu_{k \;\;| S_k }$ is set to $2^{-k^3}$.

\subsection{Notations and toolbox} \label{ss:toolbox}

In this short section, we assume that all necessary conditions are met so that
the expressions we write make sense. For any $\CC^k$ map $g$ defined on an 
interval $I \subset \R$ (open or not), we set
\begin{equation*}
   \Bars{ g }_k
 = \sup \left\{ \bigbars{ D^mg (x) },\ 0 \le m \le k, \ x \in I \right\}. 
\end{equation*}
If $g \from I \to g(I)$ is an orientation-preserving $\CC^2$ diffeomorphism, we
define $Lf$ to be  
\begin{equation*}
   Lf = D \log Df = \frac{ D^2 f }{ Df } .
\end{equation*}
The non-linear differential operator $L$ satisfies the following chain rule:
\begin{equation} \label{e:chainrule}
   L (h \circ g) = Lh \circ g \cdot Dg + Lg . 
\end{equation}
To compute higher order derivatives of compositions, we will also use Faà di 
Bruno's formula in the form
\begin{equation} \label{e:faadibruno}
   D^m (h \circ g) = \sum_{\pi \in \Pi_m} 
   \left(D^{|\pi|} h\right) \circ g \cdot \prod_{B \in \pi} D^{|B|} g,
\end{equation}
where $\Pi_m$ is the set of all partitions $\pi$ of $\{ 1, \cdots, m \}$ and 
$|X|$, for any finite set $X$, is the number of its elements. 

Let $\eta$ be a vector field on an interval $J$. Throughout the paper, we will
make no difference between $\eta$ and the function $\eta / \partial_x$, where
$x$ is the underlying coordinate in $J$, and in particular we will identify
$\partial_x$ with $1$. If $J$ is both the source of $g$ and the target of $h$
(where $g$ and $h$ are diffeomorphisms), we can define two new vector fields,
$g_*\eta$ and $h^*\eta$, which are the pushforward of $\eta$ by $g$ and its
pullback by $h$, respectively. Viewed as functions, these vector fields are 
given by
\begin{align} \label{e:pushforward}
   g_*\eta 
&= Dg \circ g^{-1} \cdot \eta \circ g^{-1}, \\
   h^*\eta &= \frac{ \eta \circ h }{ Dh } \label{e:pullback}
\end{align}
and so we easily get the following expressions for the derivatives:
\begin{align} \label{e:Dpushforward}
   D (g_*\eta) 
&= D\eta \circ g^{-1} + Lg \circ g^{-1} \cdot \eta \circ g^{-1}, \\
   D (h^*\eta) 
&= D\eta \circ h - \frac{ D^2h }{ (Dh)^2 } \;\eta \circ h . \label{e:Dpullback} 
\end{align}

\subsection{The initial vector field} \label{ss:initialization}

The construction involves two smooth functions $\alpha, \beta \from \R \to 
[0,1]$ satisfying the following conditions: 
\begin{itemize}
\item
$\alpha(x)$ equals $0$ if $x \le 1/ 6$ and $1$ if $x \ge 1/ 3$; 
\item
$\beta(x)$ equals $0$ if $x \le 1/ 6$ or $x \ge  5/ 6$, and $1$ if 
$1/ 3 \le x \le 2/ 3 $;
\end{itemize}

\begin{figure}[htbp]
\psfrag{a}{$\scriptstyle 1$}
\psfrag{b}{$\scriptstyle 0$}
\psfrag{c}{$\scriptstyle \frac{1}{6}$}
\psfrag{d}{$\scriptstyle \frac{1}{3}$}
\psfrag{e}{$\scriptstyle \frac{2}{3}$}
\psfrag{f}{$\scriptstyle \frac{5}{6}$}
\psfrag{x}{$\scriptstyle \alpha$}
\psfrag{y}{$\scriptstyle \beta$}
\psfrag{z}{$\scriptstyle \gamma$}
\centering
\includegraphics[height=3cm,width=10cm]{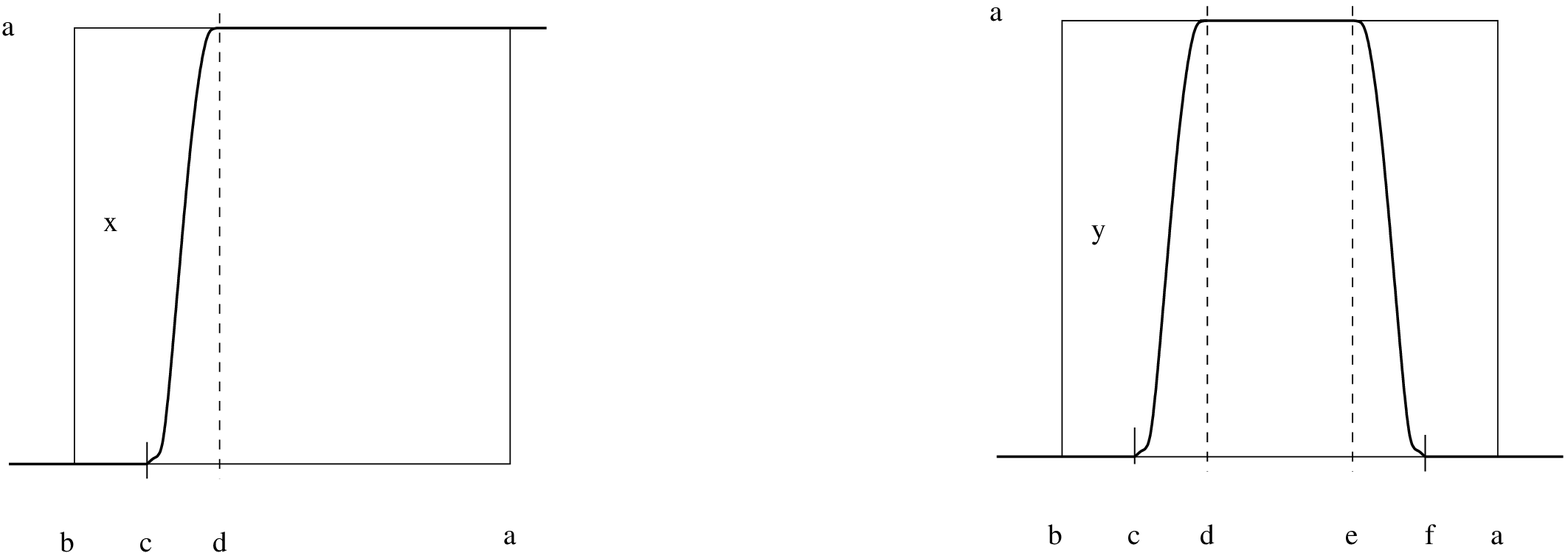}
\end{figure}

Now, setting $u_n = 2^{-n^4}$ and $v_n = 2^{-n^2}$, we define the vector field
$\xi_0$ as in \cite{Se} by
\begin{gather*}
   \xi_0(x) = - u_{n+1} - (u_n - u_{n+1})\; \alpha (2^{n+1} x - 1)
 - (v_n - u_n)\; \beta (2^{n+1} x - 1) \\ 
   \text{for $x \in [2^{-n-1}, 2^{-n}]$}, \quad 
   \xi_0(0) = 0 \quad \text{and} \quad
   \xi_0(x) = -1 \quad \text{for $x \ge 1$.} 
\end{gather*}

\begin{figure}[!h]
\psfrag{a}{$\scriptstyle 2^{-n-1}$}
\psfrag{c}{$\scriptstyle 2^{-n}$}
\psfrag{d}{$\scriptstyle u_n$}
\psfrag{e}{$\scriptstyle v_n$}
\psfrag{f}{$\scriptstyle u_{n+1}$}
\psfrag{g}{$\scriptstyle \xi_0$}
\centering
\includegraphics[width=10cm,height=1.5cm]{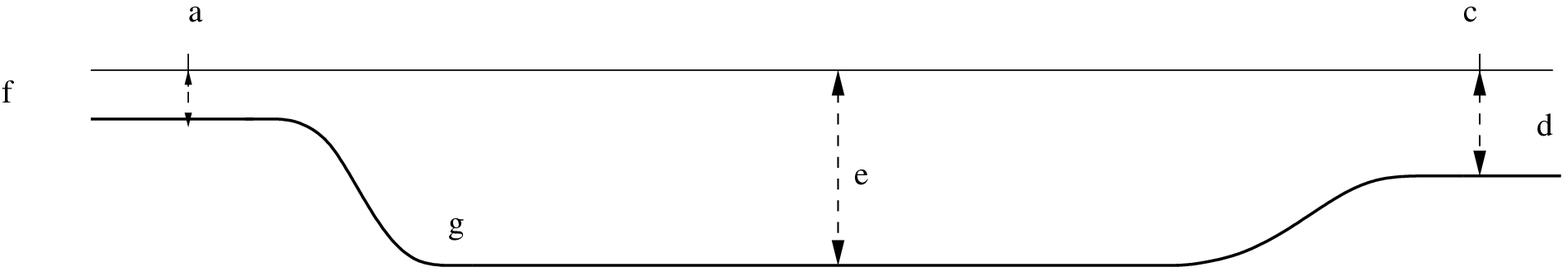}
\end{figure}

From now on, we denote by $\{ f_0^t, t \in \R \}$ the flow of $\xi_0$ and by
$\psi \from \R \to \R_+^*$ the diffeomorphism given by $\psi(t) = f_0^t(1)$ for all 
$t \in \R$. Note that, since $D\psi = \xi_0 \circ \psi$,
\begin{equation} \label{e:psi}
   \xi_0 = D\psi \circ \psi^{-1} \quad \text{and} \quad
   D\xi_0 = L\psi \circ \psi^{-1} .
\end{equation}
We also fix a forward orbit $\{ a_l, l \ge 0 \}$ of $f_0 = f_0^1$,
where $a_0 = 1$ and $a_l = f_0(a_{l-1})  = \psi(l)$ for all $l \ge 1$. 

One easily checks that $\xi_0$ is smooth, contracting, infinitely flat at the 
origin and $\CC^1$-bounded ---~with $1 < \Bars{\xi_0}_1 < +\infty$. Furthermore,
$\xi_0$ equals $-v_n$ identically on the central third of $[2^{-n-1}, 2^{-n}]$, 
namely $[2^{-n-1} + 2^{-n-1}/3, 2^{-n} - 2^{-n-1}/3]$, and $-u_n$ on $[2^{-n} - 
2^{-n-1}/6, 2^{-n} + 2^{-n}/6]$. A simple computation of travel time at constant
speed shows that for all $n \ge 4$, there exist integers $i(n)$ and $j(n)$ such 
that 
\begin{align} \label{e:ain}
2^{-n} - \frac 1{6} {2^{-n-1}} & \le a_{i(n)+2} < a_{i(n)-1} 
\le 2^{-n} + \frac 1{6}{2^{-n}} \\
\llap{\text{and} \quad}
2^{-n-1} + \frac  1{3}{2^{-n-1}} & \le a_{j(n)+2} < a_{j(n)-1} \le
2^{-n} - \frac  1{3}{2^{-n-1}} . \label{e:ajn}
\end{align}
Thus $\xi_0$ equals $-v_n$ on $[a_{ j(n)+2 }, a_{ j(n)-1 }]$,
and hence $f_0^t$ induces on $[a_{ j(n)+1 }, a_{ j(n)-1 }]$ the translation by 
$-tv_n$ for $0 \le t \le 1$. Similarly, $f_0^t$ induces the translation by $-t
u_n$ in a neighbourhood of $a_{ i(n) }$.

\subsection{The deformation process} \label{ss:deformation}

Our goal is now to produce a sequence $h_k$ of smooth diffeomorphisms of $\R_+$ 
such that the vector fields $\xi_k = h_k^* \xi_0$ converge in the $\CC^1$ topology 
to the vector field $\xi$ presented in Proposition~\ref{p:field}. In order to 
have regular perturbation patterns (and easier computations), we actually work 
at time scale, \emph{i.e.} we define $h_k$ as the conjugate $\psi \circ \Phi_k 
\circ \psi^{-1}$ of a smooth diffeomorphism $\Phi_k$ of~$\R$ (which coincides 
with the identity near $+\infty$ so that $h_k$ is also the identity near $0$). 
Conforming to the general scheme of the approximation by conjugation method (see
\cite{FK}), $\Phi_k$ is obtained as a composition 
$$ \Phi_k = \varphi_k \circ \varphi_{k-1} \circ \dots \circ \varphi_1 $$
where the diffeomorphisms $\varphi_k$ are manufactured inductively from a fixed 
function $\gamma$ and two adjustment integer parameters $q_k$ and $n_k$. Details
of the construction follow.

\begin{figure}[!h]
\psfrag{a}{$\scriptstyle -\frac 1 4$}
\psfrag{c}{$\scriptstyle -\frac 1 {20}$}
\psfrag{d}{$\scriptstyle \frac 1 {20}$}
\psfrag{f}{$\scriptstyle \frac 1 4$}
\psfrag{g}{$\scriptstyle 1$}
\psfrag{h}{$\scriptstyle \gamma$}
\centering
\includegraphics[width=5cm,height=3cm]{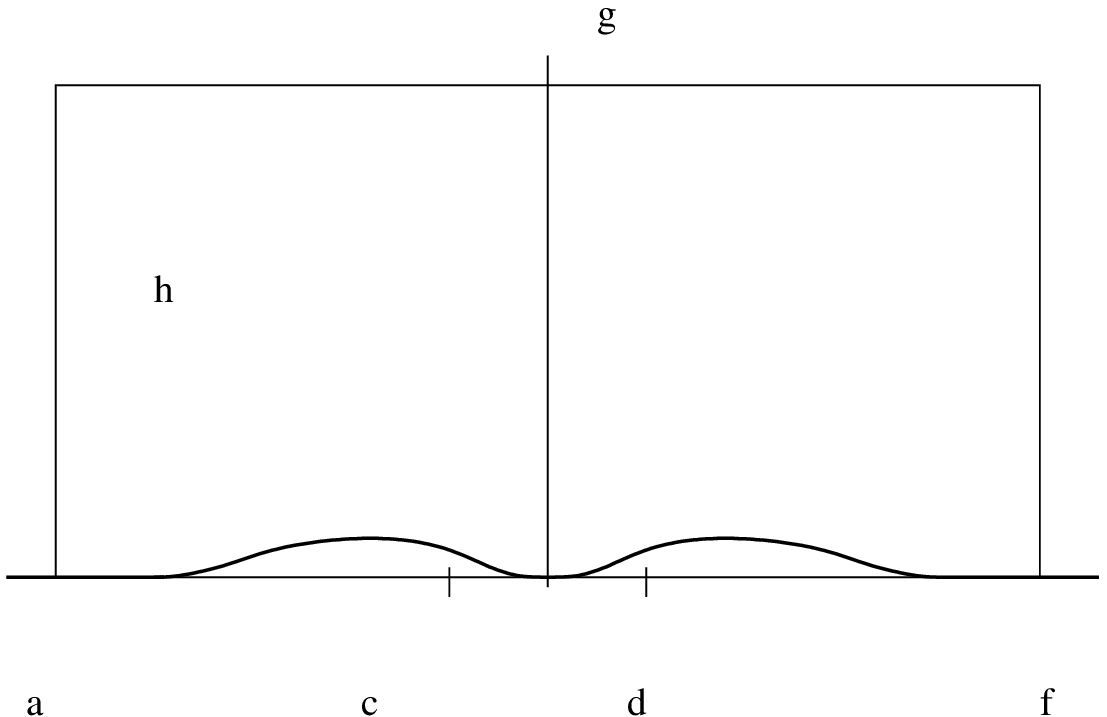}
\end{figure}

Let $\gamma \from \R \to [0,1]$ be a smooth function supported in $[-1/4, 
1/4]$ and satisfying $\gamma(t) = t^2/2$ around $0$. Given 
positive integers $q, n$, set $w_n = 2^{-n^3}$ and denote by $\gamma_{q,n} \from
\R \to [0,1]$ the smooth function defined by
\begin{equation} \label{e:gamma}
   \gamma_{q,n} (t) = w_n \gamma \Bigl( q \bigl( t-j(n) \bigr) \Bigr) \qquad
   \text{for all $t \in \R$.} 
\end{equation}
Clearly, $\gamma_{q,n}$ is supported in $\left[j(n)-\frac1{4q}, j(n)+\frac1{4q}\right]$.
Moreover, for every integer $m \ge 1$ and all $t \in \R$,
\begin{equation*} 
   D^m \gamma_{q,n} (t)
 = w_n q^m D^m \gamma \Bigl( q \bigl( t-j(n) \bigr) \Bigr)
\end{equation*}
and hence
\begin{equation*}
      \Bars{ \gamma_{q,n} }_m = w_nq^m \Bars{ \gamma }_m.
\end{equation*}
In particular, by taking $n$ large compared to $q$ once $m$ is fixed, one can 
make the $\CC^m$ norm of $\gamma_{q,n}$ arbitrarily small.

Now let $J_{q,n}$ be the interval $\left[j(n)-\frac1{2q}, j(n)+\frac1{2q}\right]$
and define $\varphi_{q,n} \from \R \to \R$ as the map meeting the following
properties:
\begin{itemize}
\item
$\varphi_{q,n} (t) = t$ for $t > j(n) + \frac1 {2q}$; 
\item
$\varphi_{q,n} (t) = t + \gamma_k(t)$ for $t \in J_{q,n}$;
\item
$\varphi_{q,n}$ commutes with the translation by $\frac 1q$ outside $J_{q,n}$,
and so $\varphi_{q,n} (t) = t + \gamma_k \left( t + \frac p q \right)$ if $t \in
\left(J_{q,n} - \frac p q\right)$, $p \ge 0$.
\end{itemize}
In short, we can write
\begin{align} \label{e:phi}
   \varphi_{q,n} (t) &= t + \sum_{p \ge 0} 
   \gamma_{q,n} \left( t + \frac p q \right), \\
\intertext{and similarly,}
   D\varphi_{q,n} (t) &= 1 + \sum_{p \ge 0} 
   D\gamma_{q,n} \left( t + \frac p q \right), \notag \\
   D^m\varphi_{q,n} (t) &= \sum_{p \ge 0} 
   D^m\gamma_{q,n} \left( t + \frac p q \right) \qquad 
   \text{for all $m \ge 2$.} \notag
\end{align}
Note that for every $t \in \R$, at most one term in each sum is nonzero since 
the support of $\gamma_{q,n}$ has length less than $1/ q$. These equations
imply that $\Bars{ \varphi_{q,n} - \id }_m = \Bars{ \gamma_{q,n} }_m$ and, in
particular, $\varphi_{q,n}$ is a diffeomorphism provided $\Bars{\gamma_{q,n}}_1 
< 1$.


\medskip

The following lemma will be used later (in the proof of Lemma \ref{l:limit}) to 
show that the limit flow coming out of our construction is not smooth at time~$
1/2$:

\begin{lemma} \label{l:Phi}
For all $k \ge 1$, let $q_k$ and $n_k$ be positive integers with $q_k$ odd and 
$w_{n_k} q_k \Bars{ \gamma }_1 < 1$. Then the diffeomorphism $\Phi_k$ defined by
\begin{equation*}
   \Phi_k = \varphi_k \circ \varphi_{k-1} \circ \dots \circ \varphi_1, \quad
   \text{where} \quad \varphi_k = \varphi_{q_k,n_k}, 
\end{equation*}
has the following behaviour on $\frac12 \Z$ for every $k \ge 1$\up:
\begin{itemize}
\item 
$\Phi_k$ coincides with the identity in a neighbourhood of $\Z + \frac12$\up;
\item
$\Phi_k$ is tangent to the identity on $\Z$ ---~meaning that $\Phi_k(l) = l$ and
$D\Phi_k(l) = 1$ for all $l \in \Z$\up;
\item 
$(L\Phi_k - L\Phi_{k-1}) (l)$, for $l \in \Z$, equals $w_{n_k} q_k^2$ if $l \le
j(n_k)$ and $0$ otherwise.
\end{itemize}
\end{lemma}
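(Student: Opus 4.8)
The plan is to prove the three bullet points by induction on $k$, using the explicit formula \eqref{e:phi} for $\varphi_k = \varphi_{q_k,n_k}$ together with the chain rule \eqref{e:chainrule} for the operator $L$. Before starting, I would record the key local facts about a single factor $\varphi_{q,n}$ with $q$ odd: since $\gamma_{q,n}$ is supported in $[j(n)-\tfrac1{4q}, j(n)+\tfrac1{4q}]$ and $\varphi_{q,n}$ commutes with translation by $1/q$ outside $J_{q,n}$, the value $\varphi_{q,n}(t)$ near an integer $l$ depends on whether $l$ is congruent mod the $\tfrac1q$-lattice to a point in the support. Because $\gamma(s)=s^2/2$ near $0$, at the centre of each bump we have $\gamma_{q,n}(j(n))=0$, $D\gamma_{q,n}(j(n))=0$, $D^2\gamma_{q,n}(j(n))=w_n q^2$; and since $q$ is odd, the half-integers $l+\tfrac12$ are never hit by the lattice $\tfrac1q\Z + j(n)$ translated onto the support (one checks $j(n)+\tfrac pq$ cannot be a half-integer for integer $p$ when $q$ is odd, and more to the point a neighbourhood of each half-integer stays outside all the translated supports), so $\varphi_{q,n}$ is the identity near $\Z+\tfrac12$. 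For integer points $l$, either $l$ lies outside every translated support — giving $\varphi_{q,n}(l)=l$, $D\varphi_{q,n}(l)=1$, $D^m\varphi_{q,n}(l)=0$ — or $l$ is at the centre $j(n) - p/q \cdot(\text{something})$; the relevant case is precisely $l = j(n) - (\text{multiple making the argument }0)$, but since $\tfrac1q\Z$ meets the integers only at multiples of... here I would instead observe that the integer points landing on a bump are exactly those $l$ with $l + p/q = j(n)$ for some $p\ge 0$, i.e. $q \mid p$ and $l = j(n) - p/q$, so $l$ ranges over $\{j(n), j(n)-1, j(n)-2,\dots\} = \Z\cap(-\infty, j(n)]$, at each of which $\varphi_{q,n}$ is tangent to the identity to first order with $D^2\varphi_{q,n}(l) = w_n q^2$.

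Next I would run the induction. The base case $k=1$ is the single-factor analysis above, with $\Phi_0 = \mathrm{id}$ so $L\Phi_0 \equiv 0$. For the inductive step, write $\Phi_k = \varphi_k \circ \Phi_{k-1}$ and apply the chain rule \eqref{e:chainrule}:
\begin{equation*}
   L\Phi_k = L\varphi_k \circ \Phi_{k-1} \cdot D\Phi_{k-1} + L\Phi_{k-1}.
\end{equation*}
For the first bullet: by induction $\Phi_{k-1}$ is the identity near $\Z+\tfrac12$, and $\varphi_k$ is the identity near $\Z+\tfrac12$ (hence near $\Phi_{k-1}$ of such a neighbourhood, since $\Phi_{k-1}$ fixes it pointwise), so $\Phi_k$ is the identity there. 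For the second bullet: evaluating at $l\in\Z$, induction gives $\Phi_{k-1}(l)=l$ and $D\Phi_{k-1}(l)=1$; then $\Phi_k(l)=\varphi_k(l)=l$ using that $\varphi_k$ is tangent to the identity on $\Z$, and $D\Phi_k(l) = D\varphi_k(l)\cdot D\Phi_{k-1}(l) = 1$. For the third bullet, evaluate the chain-rule identity at $l\in\Z$: since $\Phi_{k-1}(l)=l$ and $D\Phi_{k-1}(l)=1$,
\begin{equation*}
   (L\Phi_k - L\Phi_{k-1})(l) = L\varphi_k(l) = \frac{D^2\varphi_k(l)}{D\varphi_k(l)} = D^2\varphi_k(l),
\end{equation*}
and by the single-factor computation this is $w_{n_k}q_k^2$ when $l \le j(n_k)$ and $0$ otherwise. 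This is exactly the claimed formula.

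I expect the only genuinely delicate point to be the bookkeeping in the single-factor lemma: pinning down precisely which integers and half-integers are ``touched'' by the translated bumps of $\gamma_{q,n}$, i.e. verifying that the odd parity of $q$ forces a whole neighbourhood of each half-integer to avoid $\bigcup_{p\ge0}\bigl(\mathrm{Supp}\,\gamma_{q,n} - \tfrac pq\bigr)$ while the integers $\le j(n)$ each sit exactly at a bump centre $j(n)-\tfrac pq$ with $q\mid p$. Once the geometry of these lattices is laid out cleanly, the induction via \eqref{e:chainrule} is routine, the key simplification at each step being that $\Phi_{k-1}$ being tangent to the identity at integers kills the factor $D\Phi_{k-1}(l)$ and collapses the argument $\Phi_{k-1}(l)$ to $l$, so that $L\varphi_k$ contributes only its value $D^2\varphi_k(l)$ at the integer point.
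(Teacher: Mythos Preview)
Your proposal is correct and follows essentially the same approach as the paper: analyse a single factor $\varphi_{q,n}$ on $\tfrac12\Z$ using the odd parity of $q$ and the fact that $\gamma(0)=D\gamma(0)=0$, then compose via the chain rule \eqref{e:chainrule}, exploiting that $\Phi_{k-1}(l)=l$ and $D\Phi_{k-1}(l)=1$ to collapse $(L\Phi_k-L\Phi_{k-1})(l)$ to $L\varphi_k(l)=D^2\varphi_k(l)$. The paper packages the lattice bookkeeping slightly more cleanly---observing that $\varphi_k=\id$ on a neighbourhood of $\tfrac1{q_k}\Z+\tfrac1{2q_k}\supset\Z+\tfrac12$ and that $\varphi_k$ is tangent to the identity on all of $\tfrac1{q_k}\Z\supset\Z$---but your direct verification that integers $l\le j(n)$ sit exactly at bump centres while half-integers miss every translated support is equivalent.
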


\begin{proof}
Since $\gamma_k = \gamma_{q_k,n_k}$ is supported in $\left[ -\frac1{4q_k}, 
\frac1{4q_k} \right] + j(n_k)$ and 
$$ \varphi_k = \id + \sum_{p \ge 0} \gamma_k \left( t + \frac p {q_k} \right), $$
$\varphi_k$ is the identity on the $\frac1{4q_k}$-neighbourhood of $\frac1{q_k} 
\Z + \frac1{2q_k}$. But $q_k$ is odd, $q_k = 2p+1$ say, so 
$$\frac12=\frac{ q_k }{2q_k} =\frac{ 2p+1 }{2q_k} 
= \frac p{q_k} + \frac 1{2q_k}\in\frac 1{q_k} \Z + 
\frac 1{2q_k}, $$ 
and hence $\frac1{q_k} \Z + 
\frac1{2q_k}$ contains $\Z + \frac12$. Therefore $\Phi_k$ is the identity in a 
neighbourhood of $\Z + \frac12$. On the other hand, since $\gamma(0) =
D\gamma(0) = 0$, each $\varphi_k$ is tangent 
to the identity on $ \frac 1{q_k}\Z \supset \Z$, so $\Phi_k$ is tangent to the 
identity on $\Z$.

Now, applying the chain rule \eqref{e:chainrule} for the operator $L = D^2/D$ to 
$\Phi_k = \varphi_k \circ \Phi_{k-1}$, we get
\begin{equation*}
   L\Phi_k = L\varphi_k \circ \Phi_{k-1} \cdot D\Phi_{k-1} + L\Phi_{k-1}  .
\end{equation*}
For $l \in \Z$, we have seen above that $\Phi_{k-1} (l) = l$ and $D\Phi_{k-1} (l)
= 1$, so
$$ (L\Phi_k - L\Phi_{k-1}) (l) = L\varphi_k(l) . $$ 
If $l > j(n_k)$ then $L\varphi_k (l) = 0$ just because $\varphi_k$ agrees with 
the identity on $\left[j(n_k) + \frac1{2q_k}, +\infty\right)$. If $l \le j(n_k)$
then \eqref{e:gamma} and \eqref{e:phi} give 
$$ L\varphi_k (l) = \frac{ D^2\varphi_k (l) }{ D\varphi_k (l) }
 = D^2\varphi_k (l) = D^2\gamma_k \bigl( j(n_k) \bigr) = w_{n_k} q_k^2, $$ 
which completes the proof. 
\end{proof}

For the next lemma, we fix an enumeration of rational numbers, $\Q = \{ r_k, k 
\ge 1 \}$, and set $\Phi_0 = \id$ and $I_0 = [0,1]$. Moreover, as in Lemma \ref
{l:Phi}, we will henceforth abbreviate $\varphi_{q_k,n_k}$ as $\varphi_k$ (and
similarly $\gamma_{q_k,n_k}$ as $\gamma_k$ and $J_{q_k,n_k}$ as $J_k$).
 
\def\esti{\mathrm{i}}
\def\estii{\mathrm{ii}}
\def\estiii{\mathrm{iii}}

\begin{lemma} \label{l:induction}
For suitably chosen increasing sequences of positive integers $q_k$ and~$n_k$, 
the diffeomorphisms $\Phi_k = \varphi_k \circ \dots \circ \varphi_1$ and $h_k = 
\psi \circ \Phi_k \circ \psi^{-1}$, the vector fields $\xi_k = h_k^* \xi_0$ and 
their flows $f_k^t$ satisfy the following estimates for every $k \ge 1$\up:
\begin{align*}
   &\bigBars{ \Phi_k - \Phi_{k-1} }_{k+1} \le 2^{-k-1}, 
    \tag{$\esti_k$} \label{1k} \\
   &\bigBars{ \xi_k - \xi_{k-1} }_1 \le 2^{-k}, 
    \tag{$\estii_k$} \label{2k} \\
   &\bigBars{ f_k^t - f_{k-1}^t }_k \le 2^{-k} \quad
   \text{for all} \quad t \in I_k \cup \{1\},\tag{$\estiii_k$} \label{3k}
\end{align*}
where $I_k \subset I_{k-1}$ is a compact set avoiding the $k\th$ rational number
$r_k$ and consisting of $2^k$ disjoint segments of nonzero length, two in each
component of $I_{k-1}$. 
\end{lemma}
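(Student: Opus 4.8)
The plan is to construct the sequences $q_k$ and $n_k$ inductively, choosing at step $k$ first a large $n_k$ and then a large odd $q_k$ (or in whichever order makes the estimates close), so that the three inequalities $(\esti_k)$, $(\estii_k)$, $(\estiii_k)$ hold and the compact set $I_k$ can be extracted. The key structural fact we rely on is that $\varphi_k - \id = $ a superposition of translated copies of $\gamma_{q_k,n_k}$, so that $\Bars{\varphi_k - \id}_m = \Bars{\gamma_k}_m = w_{n_k} q_k^m \Bars{\gamma}_m$, and crucially $w_{n_k} = 2^{-n_k^3}$ decays faster in $n_k$ than any fixed power $q_k^m$ grows; hence once $q_k$ (and $k$) is fixed we can drive every $\CC^{k+1}$-type quantity below $2^{-k-1}$ by taking $n_k$ huge. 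This immediately gives $(\esti_k)$. For $(\estii_k)$: since $h_k = \psi \circ \Phi_k \circ \psi^{-1}$ and $\xi_k = h_k^*\xi_0$, the difference $\xi_k - \xi_{k-1}$ is (via \eqref{e:pullback}, \eqref{e:Dpullback} and the relations \eqref{e:psi} expressing $\xi_0$, $D\xi_0$ in terms of $\psi$) an explicit smooth function of $\Phi_k$, $\Phi_{k-1}$ and their first two derivatives, composed with $\psi$; using $(\esti_k)$ and the fact that $\Phi_k$, $\Phi_{k-1}$ agree off the bounded region where $\varphi_k$ is active, one bounds $\Bars{\xi_k - \xi_{k-1}}_1$ by a constant (depending on $\psi$ on a compact set, on $\Phi_{k-1}$, and on $q_k$) times $\Bars{\varphi_k - \id}_1 \le w_{n_k} q_k \Bars{\gamma}_1$, which again is $\le 2^{-k}$ after enlarging $n_k$.

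For $(\estiii_k)$ one uses $f_k^t = h_k^{-1} \circ f_0^t \circ h_k = \psi \circ \Phi_k^{-1} \circ \psi^{-1} \circ f_0^t \circ \psi \circ \Phi_k \circ \psi^{-1}$, and since $\psi^{-1} \circ f_0^t \circ \psi$ is just the translation $t' \mapsto t' + t$ on $\R$, this simplifies to $f_k^t = \psi \circ \bigl( \Phi_k^{-1} \circ (\,\cdot\, + t) \circ \Phi_k \bigr) \circ \psi^{-1}$. Thus $f_k^t - f_{k-1}^t$ is controlled, via Fa\`a di Bruno \eqref{e:faadibruno} applied to these compositions, by the $\CC^k$-distance between $\Phi_k^{-1} \circ (\cdot + t) \circ \Phi_k$ and $\Phi_{k-1}^{-1} \circ (\cdot + t) \circ \Phi_{k-1}$, uniformly for $t$ in the compact set $I_{k-1} \cup \{1\}$, times a factor depending on $\Bars{\psi}$ and $\Bars{\psi^{-1}}$ on the relevant compact range. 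Writing $\Phi_k = \varphi_k \circ \Phi_{k-1}$, this $\CC^k$-distance is in turn bounded by $\Bars{\varphi_k - \id}_k = w_{n_k} q_k^k \Bars{\gamma}_k$ times a constant depending on $\Phi_{k-1}$, $q_k$, $k$ and the compact set. So once $k$, the set $I_{k-1}$, $\Phi_{k-1}$ and a provisional odd $q_k$ are chosen, we pick $n_k$ large enough that the \emph{strict} inequalities $\bigBars{\Phi_k - \Phi_{k-1}}_{k+1} < 2^{-k-1}$, $\bigBars{\xi_k - \xi_{k-1}}_1 < 2^{-k}$ and $\bigBars{f_k^t - f_{k-1}^t}_k < 2^{-k}$ hold for all $t$ in the finite set $\frac1{q_k}\Z \cap [0,1]$ and at $t = 1$; by continuity of $t \mapsto \bigBars{f_k^t - f_{k-1}^t}_k$ the non-strict bounds then persist on a compact neighbourhood of $\frac1{q_k}\Z \cap (0,1)$ inside $I_{k-1}$. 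Finally, to build $I_k$: choose $q_k$ odd and large enough that $\frac1{q_k}\Z$ meets each of the $2^{k-1}$ components of $I_{k-1}$ in at least two points and also misses the rational $r_k$; then let $I_k$ be the union of $2^k$ disjoint closed segments (two per component of $I_{k-1}$, each centred at a point of $\frac1{q_k}\Z$ and short enough to avoid $r_k$) on which the $\CC^k$ estimate of $(\estiii_k)$ still holds. This forces $I_k \subset I_{k-1}$, $I_k \cap \{r_1, \dots, r_k\} = \emptyset$, and $K = \bigcap_k I_k$ a totally irrational Cantor set.

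The main obstacle is the interaction between the two orders of magnitude in the single parameter pair: $(\esti_k)$ demands only $q_k$-\emph{independent} smallness (any $q_k$ works, adjust $n_k$), but the combinatorial requirements on $I_k$ force $q_k$ to grow, and then the constants in $(\estiii_k)$—coming from Fa\`a di Bruno expansions of $\Phi_k^{-1} \circ (\cdot + t) \circ \Phi_k$, which involve negative powers of $D\Phi_{k-1}$ and derivatives of $\varphi_k$ up to order $k$—blow up with $q_k$ and with $k$. The crux is therefore the bookkeeping: choose $q_k$ \emph{first} (driven purely by the $I_k$ requirements, hence a finite constraint once $I_{k-1}$ and $r_k$ are known), absorb all resulting constants into a single bound $C_k = C_k(\psi, \Phi_{k-1}, q_k, k, I_{k-1})$, and only \emph{then} choose $n_k$ so large that $w_{n_k} q_k^{k} \Bars{\gamma}_k \cdot C_k < 2^{-k-1}$ (and $w_{n_k} q_k \Bars{\gamma}_1 < 1$ so that $\varphi_k$ is a genuine diffeomorphism). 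Since $w_{n_k}$ decays superexponentially in $n_k$ while $C_k$ and $q_k^k$ are fixed once we reach that point, such an $n_k$ always exists, and the induction closes.
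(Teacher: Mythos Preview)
Your induction scheme (choose $q_k$ for combinatorial reasons, then choose $n_k$ to kill all constants) matches the paper's, and your treatment of $(\esti_k)$ and $(\estii_k)$ is essentially the same. The gap is in $(\estiii_k)$.

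You write that $\Bars{f_k^t - f_{k-1}^t}_k$ is bounded by the $\CC^k$-distance between the conjugated translations ``times a factor depending on $\Bars{\psi}$ and $\Bars{\psi^{-1}}$ on the relevant compact range,'' and you then absorb this factor into a constant $C_k = C_k(\psi,\Phi_{k-1},q_k,k,I_{k-1})$ that is fixed \emph{before} you choose $n_k$. But the ``relevant compact range'' for $\psi^{-1}$ is exactly $\psi(M_k)$, the image of the interval where $\sigma_k^t$ and $\sigma_0^t$ differ, and this interval sits around $a_{j(n_k)}$, which tends to $0$ as $n_k\to\infty$. Since $D\psi^{-1}=1/\xi_0$ and $\xi_0\to 0$ at the origin, the $\CC^k$-norm of $\psi^{-1}$ on $\psi(M_k)$ blows up with $n_k$; your $C_k$ is not independent of $n_k$, and the induction as written is circular.

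The paper closes this loop with two ingredients you omit. First, because $n_k>n_{k-1}$, one has $\Phi_{k-1}=\id$ on $M_k$, so on the support of the difference the formula for $f_k^t$ reduces to $\psi\circ\sigma_k^t\circ\psi^{-1}$. Second---and this is the real point---$\psi$ is \emph{affine} on $M_k$ with slope $-v_{n_k}$ (this is why $\xi_0$ was built with a plateau of height $-v_{n_k}$ on the lowland $L_{n_k}$). Hence $D^m(f_k^t - f_{k-1}^t) = v_{n_k}^{\,1-m}\,D^m(\sigma_k^t-\sigma_0^t)\circ\psi^{-1}$ exactly, with no further Fa\`a di Bruno expansion, and the blow-up factor is precisely $v_{n_k}^{\,1-k}$. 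The choice \eqref{e:nk}, equivalent to $w_{n_k}/v_{n_k}^{\,k-1}$ being small, is what makes $v_{n_k}^{\,1-k}\Bars{\gamma_k}_k \le 2^{-k-4}$; this uses the specific scales $w_n=2^{-n^3}$, $v_n=2^{-n^2}$, so that $w_n/v_n^{\,k-1}=2^{-n^3+(k-1)n^2}\to 0$. In other words, the competition is not between $w_{n_k}$ and a fixed constant, but between $w_{n_k}$ and a negative power of $v_{n_k}$, and it is the relation $w_n = \o(v_n^m)$ for every $m$ that makes the induction close.
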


\begin{proof}
Let $k \ge 1$ and assume we already chose $q_l$ and $n_l$ for $1 \le l \le k-1$ 
in such a way that estimates ($\esti_l$), ($\estii_l$) and ($\estiii_l$) hold. 
In particular, since $\Phi_0 = \id$ by convention,
\begin{equation} \label{e:Phik-1}
   \bigBars{ \Phi_{k-1} - \id }_2 \le
   \sum_{l=1}^{k-1} \bigBars{ \Phi_l - \Phi_{l-1} }_2 \le 
   \sum_{l=1}^{k-1} 2^{-l-1} = \frac12 - 2^{-k} \le \frac12 . 
\end{equation}
Take an odd integer $q_k > q_{k-1}$ such that $ \frac1{q_k} \Z$ avoids $r_k$ and
meets the interior of each component of $I_{k-1}$ in at least two points. Then 
pick $n_k > n_{k-1}$ such that
\begin{equation} \label{e:nk}
   \bigBars{ \gamma_k }_{k+1} \le
   \frac{ 2^{-k-4} \; v_{n_k}^{k-1} }{ |\Pi_{k+1}| \; 
   \bigBars{ D\Phi_{k-1} }_k^{k+1} \, \bigBars{ \xi_0 }_1 }, 
\end{equation}
\emph{i.e.}
\begin{equation*} 
   \frac{ w_{n_k} }{ v_{n_k}^{k-1} } \le
   \frac{ 2^{-k-4} \;\;q_k^{-k-1} }{ |\Pi_{k+1}| \; 
   \bigBars{ \gamma }_{k+1} \, \bigBars{ D\Phi_{k-1} }_k^{k+1} \, \bigBars{ \xi_0 }_1 },
\end{equation*}
which is possible since
$$ \frac{ w_n }{ v_n^{k-1} } = 2^{-n^3 + (k-1)n^2} = \o(1) . $$
Note that inequality \eqref{e:nk} clearly implies $\bigBars{\gamma_k}_1 < 1$, and 
so $\varphi_k$ is a diffeomorphism (remember $\bigBars{\varphi_k-\id}_m=\bigBars{\gamma_k}_m$).

\medskip

Let us first prove that this choice of $n_k$ implies \eqref{1k}. Since $\Phi_k =
\varphi_k \circ \Phi_{k-1}$, Faà di Bruno's formula \eqref{e:faadibruno} gives, 
for $0 \le m \le k+1$,
\begin{equation*}
   D^m (\Phi_k - \Phi_{k-1}) 
 = \sum_{\pi \in \Pi_m} D^{|\pi|} (\varphi_k - \id) \circ \Phi_{k-1} \cdot
   \prod_{B \in \pi} D^{|B|} \Phi_{k-1} .
\end{equation*}
But for every partition $\pi \in \Pi_m$ with $m \le k+1$, 
\begin{gather*}
   \bigBars{ D^{|\pi|} (\varphi_k - \id) \circ \Phi_{k-1} }_0 =
   \bigBars{ \gamma_k }_{|\pi|} \le \bigBars{ \gamma_k }_{k+1} \\
   \llap{\text{and}\quad}
   \prod_{B \in \pi} \bars{ D^{|B|} \Phi_{k-1} } \le
   \bigBars{ D\Phi_{k-1} }_k^{k+1}, 
\end{gather*}
and so
\begin{equation*}
   \bigBars{ \Phi_k - \Phi_{k-1} }_{k+1} \le |\Pi_{k+1}| \;\;
   \bigBars{ \gamma_k }_{k+1} \, \bigBars{ D\Phi_{k-1} }_{k}^{k+1} . 
\end{equation*}    
Thus, by the choice of $n_k$ in \eqref{e:nk},
\begin{equation*}
   \bigBars{ \Phi_k - \Phi_{k-1} }_{k+1} \le 
   \frac{ 2^{-k-4} \;\;v_{n_k}^{k-1} }{ \bigBars{ \xi_0 }_1 } \le 2^{-k-1}, 
\end{equation*}
which is the desired estimate \eqref{1k} (note that $\Bars{ \xi_0 }_1 \ge 1$).

\medskip

To prove \eqref{2k}, let us define 
\begin{equation*}
	\eta_k = \Phi_k^* \partial_t - \Phi_{k-1}^* \partial_t \quad \text{and} \quad 
   \zeta_k = \varphi_k^* \partial_t - \partial_t\;,
\end{equation*}
so that 
\begin{equation*}
   \eta_k = \Phi_{k-1}^* \zeta_k \quad \text{and} \quad
   \xi_k - \xi_{k-1} = \psi_* \eta_k\;.
\end{equation*}
Viewing $\zeta_k$ as a function, 
\begin{equation*}
   \zeta_k = \frac 1 { D\varphi_k } - 1 \quad \text{and} \quad 
   D\zeta_k = - \frac{ D^2 \varphi_k }{ (D\varphi_k)^2 } . 
\end{equation*}
Given the choice of $n_k$ in \eqref{e:nk}, 
\begin{align*} 
   &\bigBars{ D\varphi_k - 1 }_0 = \bigBars{ D\gamma_k }_0 \le 2^{-k-4} \;\bigBars{
\xi_0 }_1^{-1}\quad
\left(\text{ and so } \lrBars{\frac 1 {D\varphi_k}}_0\le 2\right),\\
  \text{and } &\bigBars{ D^2\varphi_k }_0 = \bigBars{ D^2\gamma_k }_0 \le 2^{-k-4} \;\Bars{ \xi_0 }_1^{-1}, 
   \end{align*}
so 
\begin{equation}
   \bars{ \zeta_k } \le 2^{-k-3} \;\Bars{ \xi_0 }_1^{-1} \quad \text{and} \quad 
   \bars{ D\zeta_k } \le 2^{-k-2}\; \Bars{ \xi_0 }_1^{-1} .
\label{e:zeta}
\end{equation}
Next, applying \eqref{e:pullback} and \eqref{e:Dpullback} to $\eta_k =
\Phi_{k-1}^* \zeta_k$, 
\begin{equation*}
   \eta_k = \frac{ \zeta_k \circ \Phi_{k-1} }{ D\Phi_{k-1} } \quad 
   \text{and}  \quad
   D\eta_k= D\zeta_k \circ \Phi_{k-1} 
 - \frac { D^2\Phi_{k-1} }{ (D\Phi_{k-1})^2 } \;\zeta_k \circ \Phi_{k-1}
\end{equation*}
so, according to \eqref{e:Phik-1} and \eqref{e:zeta}, 
\begin{align*}
   \bars{ \eta_k } &\le 2^{-k-2} \;\Bars{ \xi_0 }_1^{-1}, \\
   \bars{ D\eta_k } &\le
   2^{-k-2} \;\Bars{ \xi_0 }_1^{-1} 
 + \frac 4 2 \;2^{-k-3} \;\Bars{ \xi_0 }_1^{-1} \le 
   2^{-k-1} \;\Bars{ \xi_0 }_1^{-1} .
\end{align*}
Now, applying \eqref{e:pushforward}, \eqref{e:Dpushforward} and \eqref{e:psi} to $\xi_k - \xi_{k-1} = \psi_* \eta_k$, 
\begin{align*}
   \bars{ \xi_k - \xi_{k-1} } 
&= \bars{ \eta_k \circ \psi^{-1} \cdot \xi_0 } \le 
   \Bars{ \eta_k }_0 \, \Bars{ \xi_0 }_0 \le 2^{-k-2}, \\
   \bars{ D(\xi_k - \xi_{k-1}) }
&= \bars{ D\eta_k \circ \psi^{-1} + D\xi_0 \cdot \eta_k \circ \psi^{-1} }
   \\ &\le
   2 \Bars{ \eta_k }_1 \, \Bars{ \xi_0 }_1 \le 2^{-k} .
\end{align*}
Thus, $\Bars{ \xi_k - \xi_{k-1} }_1 \le 2^{-k}$ as stated in estimate \eqref
{2k}.

\medskip

Let us finally prove \eqref{3k}. Set $\varphi_0 = \id$ and denote by 
$\sigma_l^t$ the flow of $\varphi_l^* \partial_t$ for $0 \le l \le k$. Then
$\sigma_0^t$ is just the translation by $t$ and 
\begin{equation*}
   \sigma_k^t = \varphi_k^{-1} \circ \sigma_0^t \circ \varphi_k . 
\end{equation*}
Since 
\begin{align*}
   \xi_k &= \psi_* \Phi_k^* \partial_t
 = \psi_* \Phi_{k-1}^* \varphi_k^* \partial_t \qquad & \text{and} \qquad 
   \xi_{k-1} &= \psi_* \Phi_{k-1}^* \partial_t, \\
\intertext{their flows are given by}
   f_k^t 
&= \psi \circ \Phi_{k-1}^{-1} \circ \sigma_k^t \circ \Phi_{k-1} \circ \psi^{-1} 
   \quad & \text{and} \quad 
   f_{k-1}^t 
&= \psi \circ \Phi_{k-1}^{-1} \circ \sigma_0^t \circ \Phi_{k-1} \circ \psi^{-1}.
\end{align*}
By definition, $\varphi_k = \varphi_{q_k,n_k}$ commutes with the translation 
$\sigma_0^{1/{q_k}}$ outside $J_k = J_{q_k,n_k}$. Consequently, $\varphi_k$ 
commutes with any iterate $\sigma_0^{p/{q_k}}$, $p \ge 1$, outside the 
interval
$$ \left[ j(n_k) + \frac1{2q_k} - \frac p{q_k}, j(n_k) + \frac1{2q_k} \right]
 = \bigcup_{q=0}^{p-1} \left(J_k - \frac q{q_k}\right) . $$
Therefore, $\sigma_k^{p/{q_k}}$ equals $\sigma_0^{p/{q_k}}$ outside 
this interval, and in particular, for $0 \le p \le q_k$, outside
\begin{equation} \label{e:Mk}
   M_k 
 = \left[ j(n_k) - 1 + \frac 1 {2q_k}, j(n_k) + \frac 1 {2q_k} \right] .
\end{equation}
On the other hand, for $t \in J_k$, 
\begin{align*}
   \sigma_k^{1/{q_k}} (t) 
&= \varphi_k^{-1} \left( \varphi_k(t) + \frac1{q_k} \right) &&\\
&= \varphi_k^{-1} \left( t + \gamma_k(t) + \frac1{q_k} \right) &\quad&
   \text{by definition of $\varphi_k$ on $J_k$} \\
&= t + \frac1{q_k} + \gamma_k(t) &\quad&
   \text{because $t + \gamma_k(t) + \frac1{q_k} > j(n_k) + \frac1{2q_k}$,} \\
&= \sigma_0^{1/{q_k}} (t) + \gamma_k(t). &&
\end{align*}
Thus, $\sigma_k^{1/{q_k}} - \sigma_0^{1/{q_k}} = \gamma_k$. Similarly, 
for any $p \ge 1$,
\begin{equation} \label{e:sigma}
   \sigma_k^{p/ {q_k}} (t) - \sigma_0^{ p/ {q_k} } (t)
 = \sum_{q=0}^{p-1} \gamma_k \left( t + \frac q {q_k} \right) \qquad
   \text{for all $t \in \R$,}
\end{equation}
so 
\begin{equation*}
   \lrBars{ \sigma_k^{ p/ {q_k} } - \sigma_0^{ p/ {q_k} } }_m
 = \bigBars{ \gamma_k }_m .
\end{equation*}
Now, in the region $M_k$ where $\sigma_k^{p/{q_k}}$ and $\sigma_0^{p/ 
{q_k}}$ disagree for $0 \le p \le q_k$, the diffeomorphism $\Phi_{k-1}$ is the
identity. Moreover, $\psi \left( j(n_k) \right) = a_{j(n_k)}$ and $\psi(M_k)
\subset \left[ a_{j(n_k)+1}, a_{j(n_k)-1} \right]$ so, by \eqref{e:ajn}, $\psi$ 
restricted to $M_k$ is an affine map with slope $-v_{n_k}$. As a consequence,
the derivatives of 
\begin{equation*}
   f_k^{p/{q_k}} = \psi \circ \Phi_{k-1}^{-1} \circ 
   \sigma_k^{p/{q_k}} \circ \Phi_{k-1} \circ \psi^{-1} 
\end{equation*}
have a simple expression on $\psi(M_k)$: 
\begin{equation*}
   D^m \left(f_k^{p/ {q_k}}\right) 
 = v_{n_k}^{1-m}\; D^m \left(\sigma_k^{p/ {q_k} }\right) \circ \psi^{-1} .
\end{equation*}
Similarly, again on $\psi(M_k)$,
\begin{equation*}
   D^m \left(f_{k-1}^{p/ {q_k} }\right)
 = v_{n_k}^{1-m}\; D^m \left(\sigma_0^{p/ {q_k} }\right) \circ \psi^{-1} .
\end{equation*}
Therefore, for $0 \le p \le q_k$ and $0 \le m \le k$,
\begin{equation*}
   \lrbars{ D^m \left( f_k^{p/{q_k}} - f_{k-1}^{p/{q_k}} \right) }
  \le v_{n_k}^{1-m} 
   \lrBars{ \sigma_k^{p/ {q_k} } - \sigma_0^{p/ {q_k} } }_m
 = v_{n_k}^{1-m} \bigBars{ \gamma_k }_m \le v_{n_k}^{1-k} \bigBars{ \gamma_k }_k \le
   2^{-k-4} 
\end{equation*}
according to our choice of $n_k$ in \eqref{e:nk}, and thus
\begin{equation*} \label{e:Dmflot}
   \BigBars{ f_k^t  - f_{k-1}^t } \le 2^{-k-4} \quad \text{for all} \quad
   t \in \frac 1 {q_k} \Z \cap [0,1] .
\end{equation*}
Now let $T_k$ be a subset of $\frac1 {q_k} \Z \cap I_{k-1}$ with exactly two 
points in each of the $2^{k-1}$ connected components of $I_{k-1}$ (remember that 
$q_k$ was chosen so that there are at least two points there). Since both vector
fields $\xi_k$ and $\xi_{k-1}$ are smooth, we can find a compact neighbourhood 
$I_k$ of $T_k$ in $I_{k-1} \setminus \{r_k\}$ consisting of $2^k$ segments and
such that
\begin{equation*} 
	\BigBars{ f_k^t  - f_{k-1}^t } \le 2^{-k} \quad \text{for all} \quad
   t \in I_k \cup \{1\} .
\end{equation*}
This completes the proof of \eqref{3k}, and thus of Lemma~\ref{l:induction}.
\end{proof}

\subsection{The limit vector field} \label{ss:limit}

\begin{lemma} \label{l:limit}
The vector fields $\xi_k$, $k \ge 1$, of Lemma \up{\ref{l:induction}} converge 
in the $\CC^1$ topology on $\R_+$, and in the $\CC^\infty$ topology on $\R_+^*$, to a 
vector field $\xi$ which satisfies all properties stated in Proposition \up{\ref
{p:field}} with $K = \cap\; I_k$.
\end{lemma}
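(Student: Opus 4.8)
The plan is to deduce Lemma~\ref{l:limit} from the three families of estimates established in Lemma~\ref{l:induction}, treating convergence, then the smoothness of $f^t$ on the Cantor set, then the failure of $\CC^2$-regularity at $t=1/2$, and finally the remaining bookkeeping properties of $\xi$ (completeness, vanishing only at $0$).

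\begin{proof}
\textbf{Convergence of $\xi_k$ and regularity of $\xi$.}
Estimate \eqref{2k} gives $\sum_k \bigBars{\xi_k-\xi_{k-1}}_1\le\sum_k 2^{-k}<\infty$, so the telescoping series $\xi_0+\sum_{k\ge1}(\xi_k-\xi_{k-1})$ converges in the $\CC^1$ topology on $\R_+$ to a $\CC^1$ vector field $\xi$. For the $\CC^\infty$ convergence on $\R_+^*$, fix a compact interval $[a,b]\subset\R_+^*$ and an integer $m$; since $\psi$ is a smooth diffeomorphism and the $\varphi_k$ are supported near $j(n_k)$ with $j(n_k)\to\infty$, only finitely many of the perturbations are not already the identity on $\psi^{-1}([a,b])$, so on $[a,b]$ the sequence $\xi_k$ is eventually constant in any $\CC^m$ norm; hence $\xi$ is smooth on $\R_+^*$. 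Because each $\xi_k=h_k^*\xi_0$ with $h_k$ the identity near $0$ and $\xi_0$ infinitely flat at $0$, $\xi$ vanishes at $0$, and since $\xi_0$ is contracting and the $\CC^1$-perturbations are uniformly small, $\xi$ is contracting, hence vanishes only at $0$; completeness on $\R_+$ follows because $\xi$ is $\CC^1$, bounded, and points move toward $0$ in forward time while $\xi$ is (uniformly close to) $-1$ near $+\infty$ so no orbit escapes in finite time.

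\textbf{Smoothness of $f^t$ for $t\in K=\cap I_k$.}
Write $f^t$ for the time-$t$ flow of $\xi$. For $t\in I_k\cup\{1\}$, estimate \eqref{3k} gives $\bigBars{f_k^t-f_{k-1}^t}_k\le2^{-k}$, so for any fixed $m$ the sequence $(f_k^t)_k$ is $\CC^m$-Cauchy uniformly over $t\in\bigcap_{k}(I_k\cup\{1\})=K\cup\{1\}$, and therefore converges in the $\CC^m$ topology; since this holds for every $m$, the limit is smooth, and by uniqueness of flows the limit is exactly $f^t$. Thus $f^t$ is a smooth diffeomorphism of $\R_+$ for every $t\in K\cup\{1\}$. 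Since the group generated by $\{f^t:t\in K\cup\{1\}\}$ inside $\ZZ^1_f\cong\R$ is exactly $\Z\oplus\sum_{\tau\in K}\tau\Z$ and composition of smooth diffeomorphisms is smooth, $f^t$ is smooth for all $t$ in this subgroup; the choice of $q_k$ in Lemma~\ref{l:induction} (avoiding $r_k$ and putting two points of $\frac1{q_k}\Z$ in each component of $I_{k-1}$) ensures $K$ is a Cantor set of irrationals.

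\textbf{Failure at $t=1/2$.}
It remains to show $f^{1/2}$ is not $\CC^2$ at $0$. By construction $f_k^{1/2}=\psi\circ\Phi_k^{-1}\circ\sigma_0^{1/2}\circ\Phi_k\circ\psi^{-1}$, and since $\Phi_k$ is tangent to the identity on $\Z$ (Lemma~\ref{l:Phi}), $f_k^{1/2}$ fixes each $a_l=\psi(l)$ with derivative $\xi_0(a_{l})/\xi_0(a_{l+\frac12})$-type expressions that are the same as for $f_0^{1/2}$; the relevant quantity is the discrete second-order invariant, for which the chain rule for $L$ gives $L(f_k^{1/2})(a_l)-L(f_0^{1/2})(a_l)$ expressible via $(L\Phi_k-L\Phi_0)(l)=\sum_{l'\le l,\,j(n_{l'})\ge l}w_{n_{l'}}q_{l'}^2$ transported by $\psi$, which grows without bound as $l\to\infty$ (equivalently as $a_l\to0$) because infinitely many terms $w_{n_{l'}}q_{l'}^2$ are added. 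Hence $\sup_{x}\bigbars{D^2 f^{1/2}(x)}=\infty$ near $0$, so $f^{1/2}\notin\CC^2$. The main obstacle is precisely this last step: one must check that passing to the $\CC^1$ limit $\xi$ (and the corresponding limit $\Phi_\infty=\lim\Phi_k$, which converges only in $\CC^1$) genuinely destroys $\CC^2$-regularity of $f^{1/2}$ rather than merely of the approximants, i.e.\ that the blow-up of the $L$-invariant survives in the limit; this is where Lemma~\ref{l:Phi} and the quantitative bookkeeping of $w_{n_k}q_k^2$ against the $v_{n_k}$-scales are used.
\end{proof}
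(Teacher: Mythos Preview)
Your treatment of the $\CC^1$ convergence of $\xi_k$ and of the smoothness of $f^t$ for $t\in K\cup\{1\}$ matches the paper's. There are, however, two genuine gaps elsewhere.

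\textbf{The $\CC^\infty$ convergence on $\R_+^*$.} You claim that on a compact $[a,b]\subset\R_+^*$ only finitely many $\varphi_k$ act nontrivially on $\psi^{-1}([a,b])$, so that $\xi_k$ is eventually constant there. This is false: $\varphi_k-\id$ is not supported near $j(n_k)$---only $\gamma_k$ is. By construction $\varphi_k$ extends $\gamma_k$ $\tfrac1{q_k}$-periodically to the left, so $\varphi_k\neq\id$ on all of $\bigl(-\infty, j(n_k)+\tfrac1{2q_k}\bigr]$, hence on $\psi^{-1}([a,b])$ for every large $k$. The paper instead uses estimate~$(\esti_k)$: it forces $\Phi_k\to\Phi$ in $\CC^\infty$ for a \emph{smooth} diffeomorphism $\Phi$ of $\R$, whence $\xi_k=\psi_*\Phi_k^*\partial_t\to\psi_*\Phi^*\partial_t=\xi$ in $\CC^\infty$ on compacta of $\R_+^*$.

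\textbf{The failure at $t=1/2$.} Here you misidentify the mechanism. You write that $\Phi_k$ ``converges only in $\CC^1$'' and that the sums $\sum w_{n_{l'}}q_{l'}^2$ accumulating in $L\Phi$ grow without bound; both statements are false. By~$(\esti_k)$, $\Phi$ is $\CC^\infty$, so $L\Phi$ is globally bounded (indeed $\sum_k w_{n_k}q_k^2\le\sum_k\Bars{\gamma_k}_2<1$). The blow-up of $Lf^{1/2}$ does not come from $L\Phi$ but from the conjugation by $\psi$. Writing $f^{1/2}=\psi\circ\sigma\circ\psi^{-1}$ with $\sigma=\Phi^{-1}\circ(\id+\tfrac12)\circ\Phi$, Lemma~\ref{l:Phi} gives $L\sigma\bigl(i(n_l)\bigr)=L\Phi\bigl(i(n_l)\bigr)=\sum_{k\ge l}w_{n_k}q_k^2$, while near the \emph{highland} point $a_{i(n_l)}$ the map $\psi^{-1}$ is affine with slope $-u_{n_l}^{-1}$, so
\[
Lf^{1/2}\bigl(a_{i(n_l)}\bigr)=-\frac{1}{u_{n_l}}\,L\sigma\bigl(i(n_l)\bigr)<-\frac{w_{n_l}}{u_{n_l}}\;\xrightarrow[l\to\infty]{}\;-\infty.
\]
The essential ingredient you omitted is precisely this amplification by $1/u_{n_l}$---the oscillation of $\xi_0$---together with the choice to evaluate at $a_{i(n_l)}$ rather than at a generic $a_l$. (Incidentally, $f_k^{1/2}$ does not fix $a_l$.)
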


\begin{proof}
The $\CC^1$ convergence of the vector fields $\xi_k$ on $\R_+$ follows directly 
from estimate \eqref{2k} in Lemma \ref{l:induction}. Next, estimate \eqref{1k} 
shows that the diffeomorphisms $\Phi_k$ converge in the $\CC^\infty$ topology to a 
smooth diffeomorphism $\Phi$ of $\R$, so the vector fields $\Phi_k^* \partial_t$
converge in the $\CC^\infty$ topology to $\Phi^* \partial_t$. Now $\xi_k$ equals 
$\psi_* \Phi_k^* \partial_t$ on $\R_+^*$ and $\psi$ is a smooth diffeomorphism 
$\R \to \R_+^*$. Given any compact set $A \subset \R_+^*$ and any integer $m \ge
0$, the restriction of $\psi$ to $\psi^{-1} (A)$ is  $\CC^m$-bounded, and hence 
the vector fields $\xi_k$ converge $\CC^m$ uniformly to $\xi$ on $A$. Therefore,
the vector fields $\xi_k$ converge to $\xi$ on $\R_+^*$ in the $\CC^\infty$ 
(compact-open) topology. 

\medskip

The convergence of the vector fields $\xi_k$ implies a similar convergence of 
their flows $f_k^t$ to the flow $f^t$ of $\xi$. Furthermore, estimate \eqref{3k} 
in Lemma \ref{l:induction} shows that, for $t \in  K \cup \{1\}$, the diffeomorphisms
$f_k^t$ converge in the $\CC^m$ topology on $\R_+$ for any $m \ge 0$. As a result,
they converge in the $\CC^\infty$ topology and $f^t$ is smooth for all $t \in K \cup
\{1\}$, and hence for all $t \in \Z \oplus \sum_{\tau \in K} \tau\Z$. Note here 
that each $I_k$, by construction, is a compact set avoiding the $k\th$ rational 
number and consisting of $2^k$ segments, two in each component of $I_{k-1}$, 
so $K = \cap \;I_k$ is indeed a Cantor set.

\medskip

The last thing we have to prove is that $f^{1/2}$ is not $\CC^2$ at $0$ or, 
equivalently, that $Lf^{1/2} = D^2f^{1/2} / Df^{1/2}$ is not 
continuous at $0$. Let us compute $Lf^{1/2}$ at a point $a_{ i(n_l) } $, as
defined in \eqref{e:ain}, for $l \in \N$. Taking the limit of the maps
\begin{equation*}
   f_k^{1/2 } = \psi \circ \Phi_k^{-1} \circ \left(\id + \frac12\right) \circ
   \Phi_k \circ \psi^{-1},
\end{equation*}
we get 
\begin{equation*}
   f^{ 1/2 }
 = \psi \circ \Phi^{-1} \circ \left(\id + \frac12\right) \circ \Phi \circ \psi^{-1} .
\end{equation*}
Let us set $\sigma = \Phi^{-1} \circ (\id + \frac12) \circ \Phi$, so that 
$f^{1/2} = \psi \circ \sigma \circ \psi^{-1}$. Near $a_{ i(n_l) }$, the map 
$\psi^{-1}$ is affine, with slope $-u_{n_l}^{-1}$, so 
\begin{equation*}
   Lf^{1/2 } \left( a_{ i(n_l) } \right) = - \frac 1 { u_{n_l} } L\sigma \bigl( i(n_l) \bigr) . 
\end{equation*}
On the other hand, by \eqref{e:chainrule} applied twice,
\begin{equation*}
   L\sigma \bigl( i(n_l) \bigr)
 = L \Phi^{-1} \left( \Phi \bigl( i(n_l) \bigr) + \frac12 \right) \cdot 
   D\Phi \bigl( i(n_l) \bigr) 
 + L\Phi \bigl( i(n_l) \bigr) .
\end{equation*}
According to Lemma \ref{l:Phi}, each $\Phi_k$, and hence $\Phi$, is tangent to
the identity on $\frac12 \Z$ provided all integers $q_k$ were chosen odd.
Moreover, $\Phi_k$ and $\Phi_k^{-1}$ coincide with the identity near $\Z + 
\frac12$, so $L \Phi^{-1} \left( i(n_l) + \frac12 \right) = 0$. Summing up, and using
the third property in Lemma \ref{l:Phi}, we get
\begin{equation} \label{e:Lsigma}
   L\sigma \bigl( i(n_l) \bigr)
 = L\Phi \bigl( i(n_l) \bigr)
 = \sum_{k \ge 1} (L\Phi_k - L\Phi_{k-1}) \bigl( i(n_l) \bigr)
 = \sum_{k \ge l} w_{n_k} q_k^2.
\end{equation}
In the end,
\begin{equation*}
   Lf^{ 1/2 } \left( a_{ i(n_l) } \right) 
 = - \frac 1 { u_{n_l} } \sum_{k \ge l} w_{n_k} q_k^2
 < - \frac{ w_{n_l} }{ u_{n_l} } \to - \infty,
\end{equation*}
and so $f^{1/2}$ is not $\CC^2$ at $0$.
\end{proof}

\section*{More examples}

Let $\SS$ denote the space of smooth diffeomorphisms of $\R_+$ which are 
infinitely tangent to the identity at the origin and have no other fixed point.
We say that a diffeomorphism $f$ of $\R_+$ is contracting if $f(x) < x$ for all 
$x>0$, and we call \emph{Szekeres vector field} of $f$ the unique $\CC^1$ vector
field generating the one-parameter group $\ZZ^1_f$ \cite{Sz, Ko}.

As mentioned in the introduction, the question we discuss in this section is 
whether the phenomenon presented in Theorem \ref{t:centralizer} is very
peculiar or quite general. First of all, because of Takens' work \cite{Ta}, this
phenomenon is limited to $\SS$. A difficulty then is that there is no obviously 
relevant topology on $\SS$ for our problem. In particular, the $\CC^\infty$ 
compact-open topology restricted to $\SS$ is extremely coarse: given any two 
diffeomorphisms $f, g \in \SS$, which are both contracting, say, it is easy to 
construct a sequence of diffeomorphisms $f_k \in \SS$ which converge to $f$ in 
the $\CC^\infty$ topology and whose germs at $0$ are all equal to that of~$g$. In
other words, the $\CC^\infty$ topology does not see the germ at $0$ while this 
germ precisely determines the smoothness of the Szekeres vector field and hence 
the nature of the centralizers in the groups $\DD^r$ for $r \ge 2$. So we do not
claim that the phenomenon described in Theorem~\ref{t:centralizer} is generic in
any way, but the following result shows that it is at least not scarce:

\begin{theorem} \label{t:general}
Let $f_0$ be a smooth contracting diffeomorphism of $\R_+$ having a smooth and 
$\CC^1$-bounded Szekeres vector field, and satisfying the following oscillation 
condition\up:
\begin{equation} \label{e:oscillation}
	\limsup_{ x \to 0 } \left( \sup_{0<y\le x}\frac
   { \bigbars{ \log \bigl( x - f_0(x) \bigr) } }
   { \bigbars{ \log \bigl( y - f_0 (y) \bigr) } } \right) 
 = +\infty.
\end{equation}
Then, for every $k \ge 0$ and every $\varepsilon > 0$, there exists a smooth 
diffeomorphism $f$ of $\R_+$ which is close to $f_0$ in the sense that
\begin{equation} \label{e:closeness}
   \bigbars{ D^m (f-f_0) (x) } \le \varepsilon \bigbars{ D^m (f_0-\id) (x) }
   \qquad \text{for all $m \le k$ and all $x \in \R_+$,}
\end{equation}
and whose centralizer $\ZZ^\infty_f$ is a proper, dense and uncountable subgroup
of $\ZZ^1_f$.
\end{theorem}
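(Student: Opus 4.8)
The strategy is to mimic the proof of Proposition \ref{p:field} as closely as possible, replacing the explicit vector field $\xi_0$ of Section \ref{ss:initialization} by the Szekeres vector field of the given $f_0$, and replacing the explicit "lowland/highland" numbers $u_n = 2^{-n^4}$, $v_n = 2^{-n^2}$, $w_n = 2^{-n^3}$ by quantities extracted from $f_0$ itself via the oscillation condition \eqref{e:oscillation}. The plan is as follows. Let $\xi_0$ denote the (smooth, $\CC^1$-bounded) Szekeres field of $f_0$, let $\psi(t) = f_0^t(1)$ as before, and fix the forward orbit $a_l = \psi(l)$. Condition \eqref{e:oscillation} is exactly designed to say that, for infinitely many scales, there is a "wild oscillation" of the contraction rate: there exist arbitrarily small $x$ with $|\log(x - f_0(x))|$ enormous compared to $|\log(y - f_0(y))|$ for some even smaller $y$. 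Near $a_l$, the displacement $\psi(l) - \psi(l+1)$ measures $x - f_0(x)$ at $x = a_l$, so \eqref{e:oscillation} translates into the existence of integers $i(n)$ (a "highland", small contraction, displacement $\approx u_n$) and $j(n) < i(n)$ (a "lowland", large contraction, displacement $\approx v_n$) with $v_n / u_n \to \infty$ along a subsequence — this is the substitute for \eqref{e:ain}–\eqref{e:ajn}. The closeness requirement \eqref{e:closeness} forces an additional twist: the perturbations must be supported where $f_0$ is "already moving", i.e. the sizes $w_n$ of the waves must be controlled not just absolutely but relative to $\Bars{D^m(f_0 - \id)}$ on the relevant region; this is why $w_n$ can no longer be a fixed power of $2$ but must be chosen inductively small enough, just as $n_k$ was chosen via \eqref{e:nk}.

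Concretely, I would reprove the three lemmas in order. Lemma \ref{l:Phi} goes through verbatim: it only uses that $\gamma$ vanishes to second order at $0$, that $q_k$ is odd, and the chain rule for $L$, so the conclusion — $\Phi_k$ tangent to the identity on $\Z$, identity near $\Z + \tfrac12$, and $(L\Phi_k - L\Phi_{k-1})(l)$ equal to $w_{n_k} q_k^2$ for $l \le j(n_k)$ and $0$ otherwise — holds unchanged. Lemma \ref{l:induction} is then reproved with one extra estimate slot: instead of just $(\esti_k),(\estii_k),(\estiii_k)$ I would also carry the relative bound \eqref{e:closeness} for $m \le k$, i.e. $\bigbars{D^m(\xi_k - \xi_{k-1})(x)} \le \varepsilon_k \bigbars{D^m(\xi_0 - \id)}$-type control with $\sum \varepsilon_k \le \varepsilon$, on the nested supports. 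The key point is that each wave $\nu_k = g_k^*\xi_0 - \xi_0$ is supported in $\psi(M_k)$ where $M_k$ sits inside the lowland, and propagates to the highland by the affine maps $f_0^{p/q_k}$ of slope $v_{n_k}/u_{n_k}$ exactly as in the Overview; choosing $w_{n_k}/v_{n_k}^{k-1}$ small enough (using $v_n \to 0$, which holds because $f_0$ is contracting and infinitely tangent to the identity) kills the amplification of $\CC^k$ norms while leaving the $\CC^1$ norm and the $L$-value at the highland point $i(n_k)$ of order $w_{n_k} q_k^2$. One must also check that the comparison map near $a_{i(n_k)}$ and $a_{j(n_k)}$ is affine to the required order: this is automatic for the explicit $\xi_0$ of Section \ref{ss:initialization} but here requires that $\xi_0$ be \emph{constant} on the relevant intervals. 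That is the one genuinely new wrinkle — a general Szekeres field need not have plateaux — so I would instead first replace $f_0$ by a $\CC^k$-$\varepsilon$-close diffeomorphism (still in $\SS$, still satisfying \eqref{e:oscillation}) whose Szekeres field does have suitable plateaux accumulating at $0$; this is a soft smoothing/flattening argument and the resulting error is absorbed into $\varepsilon$.

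With the lemmas in hand, Lemma \ref{l:limit} carries over: $\xi_k \to \xi$ in $\CC^1$ on $\R_+$ and in $\CC^\infty$ on $\R_+^*$, the flow $f = f^1$ is smooth at all $t \in \Z \oplus \sum_{\tau \in K} \tau\Z$ for the Cantor set $K = \cap I_k$ of irrationals, and $Lf^{1/2}(a_{i(n_l)}) = -u_{n_l}^{-1}\sum_{k \ge l} w_{n_k} q_k^2 < -w_{n_l}/u_{n_l} \to -\infty$ along the subsequence, so $f^{1/2}$ is not $\CC^2$ at $0$. Hence $\ZZ^2_f = \ZZ^\infty_f$ is the dense, uncountable, proper subgroup $\overline{\Z \oplus \sum_{\tau \in K}\tau\Z}$ of $\ZZ^1_f \cong \R$, exactly as in Theorem \ref{t:centralizer}, while \eqref{e:closeness} holds by the relative estimates. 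The main obstacle, as indicated, is the interface between the relative closeness condition \eqref{e:closeness} and the absolute control needed to make the flow non-smooth: one has to verify that the oscillation condition \eqref{e:oscillation} really does supply pairs of scales for which a perturbation that is relatively tiny (hence invisible to \eqref{e:closeness}) is nonetheless amplified enough under propagation to blow up the second derivative of $f^{1/2}$. Unwinding \eqref{e:oscillation} to produce the sequences $i(n), j(n)$ with $v_n/u_n \to \infty$ \emph{and} with enough room between $j(n)$ and $i(n)$ for the wave to ride through an entire highland is the technical heart of the argument; everything else is a re-run of the proof already given.
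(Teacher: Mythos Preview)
Your overall plan is sound and close to the paper's, but there is one genuine gap and one genuine methodological difference worth flagging.

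\textbf{The gap: what you extract from the oscillation condition is too weak.} You read \eqref{e:oscillation} as producing scales with $v_n/u_n \to \infty$. That is true but not enough. The condition is a ratio of \emph{logarithms}, and what it actually yields (this is the paper's Lemma~\ref{l:orbit}) are sequences $i(n)<j(n)$ with
\[
\frac{\log u_n}{\log v_n}\xrightarrow[n\to\infty]{}+\infty,
\qquad u_n=\sup_{V_{i(n)}}|\xi_0|,\quad v_n=\inf_{V_{j(n)}}|\xi_0|,
\]
i.e.\ $u_n=\o(v_n^m)$ for every fixed $m$. This is essential: the construction needs an intermediate scale $w_n$ satisfying simultaneously $w_n=\o(v_n^m)$ for all $m$ (so that the $\CC^k$ norms of the propagated waves are killed, which is how \eqref{3k} survives) \emph{and} $u_n=\o(w_n)$ (so that $w_{n_l}/u_{n_l}\to\infty$, which is what blows up $Lf^{1/2}$). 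The paper takes $w_n=\sqrt{u_n}$. With only $v_n/u_n\to\infty$ no such $w_n$ exists, and your dichotomy ``relatively tiny yet amplified enough'' collapses. Your closing paragraph identifies exactly this tension as the ``technical heart''; the resolution is precisely to use the log-ratio form of \eqref{e:oscillation}.

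\textbf{The methodological difference: the paper does not flatten.} You propose to first replace $f_0$ by a nearby diffeomorphism whose Szekeres field has genuine plateaux, so that $\psi$ is affine on the relevant intervals and the original computations go through verbatim. The paper takes a different route: it works directly with the given $\xi_0$, accepts that $\psi$ is not affine on $M_k$, and compensates with Fa\`a di Bruno estimates. Concretely, it imposes two extra constraints on $n_k$ (bounds on $\bigBars{D\psi}_{k-1}$ past $j(n_k)-1$ and on $\bigBars{D^m\psi^{-1}}_0$ on $[a_{j(n_k)+1},a_{j(n_k)-1}]$ in terms of $v_{n_k}^{-m-1}$), strengthens \eqref{e:nk} to carry an extra factor $v_{n_k}^{k+1}$ and $|\Pi_{k+1}|$, and redoes the $\CC^k$ estimate for $f_k^t-f_{k-1}^t$ accordingly. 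In the irregularity computation a new term $Lf_0^{1/2}(a_{i(n_l)})$ appears and is shown to tend to $0$. Your flattening route is a reasonable alternative and would let you reuse Subsections~\ref{ss:deformation}--\ref{ss:limit} almost verbatim, but it is not as ``soft'' as you suggest: you must flatten $\xi_0$ near both $a_{j(n)}$ and $a_{i(n)}$ (and near the image $f_0^{1/2}(a_{i(n)})$) while keeping the \emph{relative} bound \eqref{e:closeness} on $f-f_0$ at all orders $m\le k$, which is a nontrivial bookkeeping exercise in its own right. Either approach works; the paper's avoids this preliminary step at the cost of heavier estimates.

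A small further remark: the paper obtains the closeness \eqref{e:closeness} for given $k$ and $\varepsilon$ simply by starting the inductive construction at a large step $k_0$ rather than at step~$1$; this is cleaner than carrying an extra slot of relative estimates through the induction as you propose. Also, your ordering $j(n)<i(n)$ is reversed; the highland sits to the right (smaller time index) of the lowland.
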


Note that the oscillation condition \eqref{e:oscillation} forces $f_0$ to be 
infinitely tangent to the identity at~$0$. 

\medskip
 
It is interesting to compare this result with Theorem 3.1 in \cite{Se}. Indeed, 
the latter says that, if a smooth contracting diffeomorphism $f$ does not 
oscillate much in the sense that  
\begin{equation*}
   \sup_{0 < y \le x } \bigl( y - f(y) \bigr)
 = \O \left( \bigl( x - f(x) \bigr)^\lambda \right) \qquad
   \text{for some } \lambda > \frac{ r-1 } r,
\end{equation*}
then the Szekeres vector field of $f$ is $\CC^r$. Theorem \ref{t:general} can be
thought of as a kind of ``partial converse''. 

\begin{proof}
The idea of the proof is the same as for Theorem \ref{t:centralizer}: we start 
with a smooth vector field, here the Szekeres vector field $\xi_0$ of the given 
$f_0$ instead of Sergeraert's vector field, and construct deformations $\xi_k$ 
of $\xi_0$ which converge to the Szekeres vector field $\xi$ of the wanted~$f$. 
We will just hint at how to adapt the arguments in this more general setting. As
before, we denote by $f_0^t$ the flow of $\xi_0$ (so that $f_0 = f_0^1$) and by
$\psi$ the diffeomorphism $\R \to \R_+^*$ given by $\psi(t) = f_0^t(1)$ for all
$t \in \R$. We also fix a forward orbit of $f_0$, namely $\{ a_l = f_0^l(1) =
\psi(l), l \ge 0 \}$, and we set $V_l = [a_{l+2}, a_{l-2}]$ for all $l \ge 0$.

\begin{lemma} \label{l:orbit}
There exist two alternating sequences of integers $i(n)$ and $j(n)$, $n \ge 0$, 
with $i(n) < j(n) < i(n+1) < j(n+1) < \ldots$, such that
\begin{equation} \label{e:divlog}
   \frac{ \log u_n }{ \log v_n } \xrightarrow[ n \to \infty ]{} +\infty
\end{equation}
where $u_n = \sup_{ V_{i(n)} } |\xi_0|$ and $v_n = \inf_{ V_{j(n)} } |\xi_0|$.
In particular, $V_{i(n)}$ and $V_{j(n)}$ are disjoint when $n$ is large enough.
\end{lemma}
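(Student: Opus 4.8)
The plan is to transfer the oscillation hypothesis \eqref{e:oscillation} --- which compares the function $x\mapsto x-f_0(x)$ at \emph{arbitrary} pairs of points --- into a statement about $|\xi_0|$ read off along the single orbit $\{a_l\}$, and then to extract the indices $i(n)$ and $j(n)$ directly from it. The bridge is one elementary remark: since $\xi_0$ is $\CC^1$-bounded and $D\psi=\xi_0\circ\psi$, the positive function $\theta=-\xi_0\circ\psi=|\xi_0\circ\psi|$ satisfies $\theta'=(\xi_0'\circ\psi)\,\theta$, hence $|(\log\theta)'|\le\Bars{\xi_0}_1$ and $\log\theta$ is Lipschitz on $\R$ with constant $C:=\Bars{\xi_0}_1$. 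Thus $|\xi_0|$ changes by at most a bounded multiplicative factor along $\psi$ over windows of bounded parameter length. Recognizing that this Lipschitz property is exactly what makes the transfer lossless is the one genuinely non-routine point of the proof; the rest is bookkeeping.

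Granting this, I would first record the comparisons it yields. For $x\in(0,1]$ let $\ell(x)$ be the integer with $x\in(a_{\ell(x)+1},a_{\ell(x)}]$, so that $\psi^{-1}(x)\in[\ell(x),\ell(x)+1)$. Writing $x-f_0(x)=\int_{\psi^{-1}(x)}^{\psi^{-1}(x)+1}\theta$ and using the Lipschitz bound, one gets $e^{-2C}\bigl(x-f_0(x)\bigr)\le|\xi_0(a_{\ell(x)})|\le e^{2C}\bigl(x-f_0(x)\bigr)$; and since $V_l=\psi([l-2,l+2])$, both $\sup_{V_l}|\xi_0|$ and $\inf_{V_l}|\xi_0|$ lie within a factor $e^{2C}$ of $|\xi_0(a_l)|$. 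Combining the two, for every $x\in(0,1]$ the numbers $x-f_0(x)$, $\sup_{V_{\ell(x)}}|\xi_0|$ and $\inf_{V_{\ell(x)}}|\xi_0|$ agree up to the fixed factor $e^{4C}$; equivalently, $\log\sup_{V_{\ell(x)}}|\xi_0|$ and $\log\inf_{V_{\ell(x)}}|\xi_0|$ each differ from $\log\bigl(x-f_0(x)\bigr)$ by at most $4C$.

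Next I would feed in the hypothesis. By \eqref{e:oscillation} there are $x_n\searrow 0$ in $(0,1)$ and $y_n\in(0,x_n]$ with $|\log(x_n-f_0(x_n))|/|\log(y_n-f_0(y_n))|\to+\infty$. Put $i(n)=\ell(x_n)$ and $j(n)=\ell(y_n)$; then $j(n)\ge i(n)$ because $y_n\le x_n$, while $x_n\to 0$ forces $i(n)\to\infty$ and hence $j(n)\to\infty$. Since $x_n-f_0(x_n)\le x_n$ and $y_n-f_0(y_n)\le x_n$ both tend to $0$, the numerator and the denominator of that ratio both tend to $+\infty$, so the additive $\O(1)$ errors from the previous step wash out; with $u_n=\sup_{V_{i(n)}}|\xi_0|<1$ and $v_n=\inf_{V_{j(n)}}|\xi_0|<1$ (the inequalities holding for $n$ large, as $\xi_0$ vanishes at $0$ and $i(n),j(n)\to\infty$), this gives $\log u_n/\log v_n=|\log u_n|/|\log v_n|\to+\infty$, which is \eqref{e:divlog}.

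It remains to arrange the chain $i(0)<j(0)<i(1)<j(1)<\cdots$ and the disjointness. If $i(n)=j(n)$ held for infinitely many $n$, then for those $n$ both $u_n$ and $v_n$ would lie within a factor $e^{2C}$ of $|\xi_0(a_{i(n)})|$, so $\log u_n/\log v_n$ would stay bounded there --- impossible; hence $i(n)<j(n)$ for all large $n$. Since $i(n)\to\infty$, I would then pass to a subsequence and reindex so that in addition $j(n)<i(n+1)$ for every $n$, producing the desired alternating sequences of integers. Finally, if $V_{i(n)}$ and $V_{j(n)}$ met then $|i(n)-j(n)|\le 4$, so the Lipschitz bound would force $u_n$ and $v_n$ to be within a factor $e^{8C}$ of each other, once more contradicting $\log u_n/\log v_n\to+\infty$; hence these intervals are disjoint once $n$ is large.
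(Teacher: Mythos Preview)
Your proof is correct and follows precisely the route the paper indicates: the paper leaves this lemma to the reader, saying only that it ``relies on the oscillation property \eqref{e:oscillation} of $f_0$ and the fact that $\xi_0$ is $\CC^1$,'' and your argument implements exactly that --- the Lipschitz bound on $\log\theta$ coming from $\CC^1$-boundedness of $\xi_0$ is the natural way to transfer \eqref{e:oscillation} from $x-f_0(x)$ to $|\xi_0|$ on the windows $V_l$. The subsequence extraction to enforce strict alternation and the disjointness argument are handled cleanly.
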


The proof of Lemma \ref{l:orbit} is left to the reader. It relies on the 
oscillation property \eqref{e:oscillation} of $f_0$ and the fact that $\xi_0$ is
$\CC^1$. 

\medskip

We now choose a sequence $w_n$ with intermediate decay, \emph{i.e.} satisfying 
$w_n = \o(v_n^m)$ for all $m$ and $u_n = \o(w_n)$ (one can take for instance
$w_n=\sqrt{u_n}$). Then we define the maps $\gamma_{q,n}$ and $\varphi_{q,n}$
by formulae \eqref{e:gamma} and \eqref{e:phi}, using the same function $\gamma$ 
but the new parameters $w_n$ and $j(n)$. Extending thence all other definitions 
and notations of Subsection \ref{ss:deformation}, our task is to show that 
Lemmas \ref{l:induction} and \ref{l:limit} still hold.

\begin{proof}[Proof of Lemma \ref{l:induction} in the general setting]
We only insist here on the points that differ from the proof in subsection \ref
{ss:deformation}. Again, we proceed by induction. At step $k$, the choice of 
$q_k$ is just the same, but we need to be more careful about $n_k$. The reason
is that the map $\psi$ is no longer affine on the regions we consider, and hence
the computation of higher derivatives of compositions is trickier.
 
First, using the fact that $\xi_0$ is smooth and infinitely flat at $0$, one can
check that, for any fixed $m \ge 1$, 
\begin{gather*} 
   \sup  \left\{\bars{ D^m\psi (t)},\,t\in \left[j(n)-1, \infty\right)\right\}
   \xrightarrow[ n \to \infty ]{} 0 \\ 
   \llap{\text{and}\quad}
   v_n^{m+1} \sup \left\{\bars{ D^m\psi^{-1} (x)},\, x\in \left[a_{ j(n)+1 },
a_{ j(n)-1 } \right] \right\}
   \xrightarrow[ n \to \infty ]{} 0 . 
\end{gather*}
(this is derived from the relations $D\psi = \xi_0 \circ \psi$ and $D\psi^{-1}
= 1/\xi_0$).

Then we pick an integer $n_k > n_{k-1}$ meeting the following three conditions:
\begin{align}
	\BigBars{ D\psi_{ \;\;\big|\left[j(n_k)-1, \infty\right) }}_{k-1} &< 1, \label{e:normpsi} \\
   \BigBars{ D^m\psi^{-1} _{ \quad\big|\left[a_{j(n_k)+1}, a_{j(n_k)-1}\right] } }_0 
   &< v_{n_k}^{-m-1} \quad \text{for $1 \le m \le k$,} \label{e:Dmpsiinvk} \\
   \llap{\text{and}\quad}
   \Bars{ \gamma_k }_{k+1} &\le \frac{ 2^{-k^2-4} \;v_{n_k}^{2k} }
   { |\Pi_{k+1}|^2  \Bars{ D\Phi_{k-1} }_k^{k+1} \, \Bars{ \xi_0 }_1 }. 
   \label{e:nkbis}
\end{align}
Inequality \eqref{e:nkbis} is stronger than \eqref{e:nk} and thus implies \eqref
{1k} and \eqref{2k} of Lemma \ref{l:induction} (the arguments are strictly the 
same). The proof of \eqref{3k} is more complicated but we still have (with our
former notations)
\begin{equation*}
   f_k^t
 = \psi \circ \Phi_{k-1}^{-1} \circ \sigma_k^t \circ \Phi_{k-1} \circ \psi^{-1} 
   \quad \text{and}  \quad 
   f_{k-1}^t
 = \psi \circ \Phi_{k-1}^{-1} \circ \sigma_0^t \circ \Phi_{k-1} \circ \psi^{-1}.
\end{equation*}
For $t = p /q_k$, $0 \le p \le q_k$, again $\sigma_k^t = \sigma_0^t$
outside $M_k = \left[ j(n_k) - 1 + \frac1{2q_k}, j(n_k) + \frac1{2q_k} \right]$, 
so $f_k^t - f_{k-1}^t = 0$ outside $\psi (M_k)$. Furthermore, $\Phi_{k-1} = \id$ on 
$M_k$. Thus, on $\psi(M_k)$,
\begin{equation*}
   f_k^t = \psi \circ \sigma_k^t \circ \psi^{-1} \quad \text{and} \quad 
   f_{k-1}^t = \psi \circ \sigma_0^t \circ \psi^{-1}
\end{equation*}
or, equivalently,
\begin{equation*}
   f_k^t - f_{k-1}^t = (\psi \circ \sigma_k^t) \circ \psi^{-1}
 - (\psi \circ \sigma_0^t) \circ \psi^{-1} .
\end{equation*}
For $m \le k$, Faà di Bruno's formula gives
\begin{equation} \label{e:Dmflotb}
   D^m \bigl(f_k^t - f_{k-1}^t\bigr) = \sum_{ \pi \in \Pi_m } 
   D^{ |\pi| } \bigl(\psi \circ \sigma_k^t - \psi \circ \sigma_0^t\bigr) \circ \psi^{-1}
   \cdot \prod_{B \in \pi } D^{|B|} \psi^{-1}.
\end{equation}
According to inequality \eqref{e:Dmpsiinvk}, 
\begin{equation} \label{e:prod}
   \lrbars{ \prod_{ B \in \pi } D^{|B|} \psi^{-1} } < v_{n_k}^{-2k} \quad 
   \text{on} \quad 
   \psi(M_k) \subset \left[ a_{ j(n_k)+1 }, a_{ j(n_k)-1 } \right] .
\end{equation}
Now write
\begin{equation*}
   \psi \circ \sigma_k^t - \psi \circ \sigma_0^t
 = (\psi \circ \sigma_0^t) \circ (\sigma_0^{-t} \circ \sigma_k^t)
 - (\psi \circ \sigma_0^t)
\end{equation*}
and observe, using \eqref{e:sigma}, that 
\begin{equation*}
   \sigma_0^{-t} \circ \sigma_k^t = \id + \sum_{q=0}^{p-1}
   \gamma_k \left( \id + \frac q {q_k} \right) \qquad
   \text{for $t = \frac p {q_k}$, $0 \le p \le q_k$.}
\end{equation*}
For $l \le k$, Faà di Bruno's formula gives 
\begin{align*}
   D^l \left(\psi \circ \sigma_k^t - \psi \circ \sigma_0^t\right) 
&= D^l \left(\left(\psi \circ \sigma_0^t\right) \circ \left(\sigma_0^{-t} \circ \sigma_k^t\right)
 - \left(\psi \circ \sigma_0^t\right)\right) \\
&= \sum_{ \pi \in \Pi_l,\;|\pi|<l } 
   D^{ |\pi| } \left(\psi \circ \sigma_0^t\right) \circ \left(\sigma_0^{-t} \circ \sigma_k^t\right) 
   \cdot \prod_{B \in \pi } D^{|B|}\left( \sigma_0^{-t} \circ \sigma_k^t\right) .
\end{align*}
Since $\sigma_0^t = \id + t$, it follows from \eqref{e:normpsi} that 
\begin{equation*}
   \bigbars{ D^{ |\pi| } (\psi \circ \sigma_0^t) \circ 
   (\sigma_0^{-t} \circ \sigma_k^t) }
 < 1 \quad \text{on } M_k. 
\end{equation*}
On the other hand, for any partition $\pi \in \Pi_l$ with less than $l$ blocks, 
\emph{i.e.} $|\pi| < l$, one block $B$ of $\pi$ has at least two elements, so at
least one factor in the product 
\begin{equation*}
   \prod_{B \in \pi} D^{|B|} \left(\sigma_0^{-t} \circ \sigma_k^t\right)
 = \prod_{B \in \pi} D^{|B|} \left(\id + \sum_{q=0}^{p-1}
   \gamma_k \left( \id + \frac q {q_k} \right)\right)
\end{equation*} 
is a derivative of order at least $2$, and hence is bounded above by $\Bars
{\gamma_k}_k$, while the others are all less than $2$. Therefore the product is 
bounded above by $2^{l-2} \Bars{ \gamma_k }_k \le 2^{k-2} \Bars{ \gamma_k }_k$. 
In the end, 
\begin{equation*}
   \Bigbars{ D^l (\psi \circ \sigma_k^t - \psi \circ \sigma_0^t) } \le
   |\Pi_l| \, 2^{l-2} \Bars{ \gamma_k }_k \le 
   |\Pi_k| \, 2^{k-2} \Bars{ \gamma_k }_k .
\end{equation*}
In view of \eqref{e:Dmflotb}, \eqref{e:prod} and \eqref{e:nkbis} this implies 
that $\bigBars{ f_k^t - f_{k-1}^t}_k \le 2^{-k-4}$ for all $t =  p /{q_k}$, 
$0 \le p \le q_k$, and one completes the proof of Lemma \ref{l:induction} just 
as in Subsection \ref{ss:deformation}.
\end{proof}

\begin{proof}[Proof of Lemma \ref{l:limit} in the general setting]
The proof that the vector fields $\xi_k$ converge and that the limit flow $f^t$
is smooth for $t \in \Z \oplus \sum_{\tau \in K} \tau \Z$ is strictly the same
as in Subsection \ref{ss:limit}. Note that if we start our construction at step 
$k_0$ instead of step $1$, the limit diffeomorphism $f$ satisfies the condition
\eqref{e:closeness} for $l \le k_0$ and $\varepsilon = 2^{-k_0-1}$, so one can 
construct $f$ arbitrarilly close to $f_0$ in the sense of Theorem \ref
{t:general}. 

The part of Lemma \ref{l:limit} that needs a little extra effort is the 
irregularity of $f^{1/2}$. Again,
\begin{equation*}
   f^{1/2} = \psi \circ \sigma \circ \psi^{-1}, 
\end{equation*}
with $\sigma = \Phi^{-1} \circ (\id + 1/2) \circ \Phi$. The computation of
$L\sigma \bigl( i(n_l) \bigr)$ leading to \eqref{e:Lsigma} can be integrally 
transposed here, and yields $L\sigma \bigl( i(n_l) \bigr) = \sum_{k \ge l} 
w_{n_k} q_k^2$ (with the new $w_n$). This time however, $\psi$ is not affine on 
the involved region, so the computation of $Lf^{1/2} \bigl( i(n_l) \bigr)$ 
is a bit longer. Formula \eqref{e:chainrule} applied twice gives
\begin{equation*}
   Lf^{ 1/2 }
 = \Big( L\psi \circ (\sigma \circ \psi^{-1}) \cdot
   D (\sigma \circ \psi^{-1}) \Big)
 + \Big( L\sigma \circ \psi^{-1} \cdot D\psi^{-1} \Big)
 + L\psi^{-1},
\end{equation*}
and hence, since $D\psi^{-1} =  1 /{ \xi_0 }$,
\begin{equation*}
   Lf^{ 1/2 } \bigl( a_{ i(n_l) } \bigr)
 = \left[ L\psi \circ (\sigma \circ \psi^{-1}) \cdot D (\sigma \circ \psi^{-1}) 
 + L\psi^{-1} \right] \bigl( a_{ i(n_l) } \bigr) 
 + \frac{ L\sigma \bigl( i(n_l) \bigr) }{ \xi_0 \bigl( a_{i(n_l)} \bigr) } .
\end{equation*}
Now, according to Lemma \ref{l:Phi} (still valid in our new setting), the limit 
$\Phi$ of the diffeomorphisms $\Phi_k$ coincides with the translation by $
1/2$ at order one on $\Z$, so the first term of the above sum is equal to
\begin{equation*}
   \left[ L\psi \circ \left(\id + \frac12\right) \circ \psi^{-1} \cdot 
  D \left( \left(\id + \frac12\right) \circ \psi^{-1} \right) + L\psi^{-1}
\right]
   \bigl( a_{i(n_l)} \bigr) 
 = Lf_0^{ 1/2 } \bigl( a_{ i(n_l) } \bigr) .
\end{equation*}
But when $l$ grows, $Lf_0^{1/2} \bigl( a_{i(n_l)} \bigr)$ tends to $Lf_0^{
1/2} (0) = 0$. Therefore
\begin{equation*}
   Lf^{ 1/2 } \bigl( a_{i(n_l)} \bigr) \sim \frac 
   {\sum_{ k \ge l } w_{n_k} q_k^2}{\xi_0 \bigl( a_{i(n_l)} \bigr)} < - \frac{w_{n_l}}{ u_{n_l}} 
   \xrightarrow[ l \to \infty ]{} - \infty,
\end{equation*}
so $f^{ 1/2 }$ is not $\CC^2$ at $0$. This concludes the proof of Lemma
\ref{l:limit} and of Theorem \ref{t:general}.
\end{proof}
\def\qedsymbol{}
\end{proof}

\end{document}